\begin{document}
   \title{ On efficient linear and fully decoupled finite difference method for wormhole propagation with heat transmission process on staggered grids  
\thanks{This work is supported by the National Natural Science Foundation of China  grants  12271302, 12131014. 
}}

    \author{ Xiaoli Li
        \thanks{School of Mathematics, Shandong University, Jinan, Shandong, 250100, P.R. China. Email: xiaolimath@sdu.edu.cn}.
        \and Ziyan Li
         \thanks{Department of Mathematics, City University of Hong Kong, Hong Kong SAR, China. Email: ziyali3-c@my.cityu.edu.hk}.
         \and Hongxing Rui
        \thanks{Corresponding author. School of Mathematics, Shandong University, Jinan, Shandong, 250100, P.R. China. Email: hxrui@sdu.edu.cn}.
}

\maketitle

\begin{abstract}
In this paper, we construct an efficient linear and fully decoupled finite difference scheme for wormhole propagation with heat transmission process on staggered grids, which only requires solving a sequence of linear elliptic equations at each time step. We first derive the positivity preserving properties for the discrete porosity and its difference quotient in time, and then obtain optimal error estimates for the velocity, pressure, concentration, porosity and temperature in different norms rigorously and carefully by establishing several auxiliary lemmas for the highly coupled nonlinear system. Numerical experiments in two- and three-dimensional cases are provided to verify our theoretical results and illustrate the capabilities of the constructed method.
\end{abstract}

 \begin{keywords}
wormhole propagation with heat transmission; finite difference scheme on staggered grids; positivity preserving property; optimal error estimates
 \end{keywords}
   \begin{AMS}
35K05, 65M06, 65M12.
    \end{AMS}
\markboth{XIAOLI LI, ZIYAN LI AND HONGXING RUI} {Wormhole Propagation with Heat Transmission Process}
 \section{Introduction}
 As an efficient technique of enhanced oil recovery, the acid treatment of carbonate reservoirs has been widely used in improving oil production rate. In this technique, acid is injected into matrix to dissolve the rocks and deposits around the well bore, which facilitates oil flow into production well, and thus forms a channel with high porosity. However, the efficiency of this technique strongly depends the dissolution patterns.
Specifically three dissolving patterns can be observed with the increase of injection rate, which include face dissolution pattern, wormhole pattern and uniform dissolution pattern.  Wormhole pattern with narrow channel is the most efficient one for successful simulation \cite{fredd1998influence,golfier2002ability,liu2013wormhole,chen2018simulation}. 


Due to the important role that wormhole plays in enhancing  productivity, several research works have been conducted to study the formation and propagation of wormholes \cite{panga2005two,wu2015parallel,golfier2001acidizing,akanni2017computational,wei2017modeling}. 
McDuff et al. \cite{mcduff2010understanding} developed a new methodology to give high-resolution nondestructive imaging and analysis in the experimental studies for the wormhole model. Pange et al. \cite{panga2005two} proposed the well-known two-scale continuum model for this problem in reactive dissolution of a porous medium. There are also many numerical research works for the wormhole model. Kou et al. \cite{kou2016mixed} developed a mixed finite element method and established stability analysis and a priori error estimates for velocity, pressure, concentration and porosity in different norms. They \cite{kou2019semi} also proposed a semi-analytic time scheme for the wormhole propagation with the Darcy-Brinkman-Forchheimer model. Li et al. \cite{li2017characteristic,li2018block,li2019superconvergence} extended finite difference methods on the staggered grids to the wormhole models with different frameworks. Later the discontinuous Galerkin method was applied to the wormhole model in \cite{guo2019high}.
 Recently Xu \cite{xu2019high} constructed the high-order bound-preserving discontinuous Galerkin method for this problem to preserve the boundedness of porosity and concentration of acid. However, the above works do not consider the factor of temperature, which has an important influence on the thermodynamic parameters including the surface reaction rate and molecular diffusion coefficient \cite{kalia2010fluid,wu2021thermodynamically}. As far as we know, most previous numerical works have focused only on  the chemical reaction and mass transport processes in wormhole propagation, but completely ignored the significant influence of temperature factor. There are only a few works to consider the wormhole model with heat transmission process. Kalia et al. \cite{kalia2010fluid} applied a mathematical model to investigate the effect of temperature on carbonate matrix acidizing. They also presented the numerical simulation for the wormhole model by using the finite volume method. A radial heat transfer model is introduced to capture heat transfer and reaction heat in \cite{li2017simulation}. Recently Wu et al. \cite{wu2021thermodynamically} proposed the modified momentum conservation equation and established the thermal DBF framework  by introducing the energy balance equation. However, to the best of our knowledge, there are no related work to consider the theoretical analysis for wormhole propagation with heat transmission process.  It is much more challenging to develop efficient numerical schemes and to carry out corresponding error analysis for this highly coupled nonlinear system. 
 
The main purposes of this work are to construct an efficient linear and fully decoupled finite difference scheme for wormhole propagation with heat transmission process on staggered grids, and carry out error analysis rigorously. We also give several 
numerical experiments in two- and three-dimensional cases to verify our theoretical results and illustrate the capabilities of the constructed method. More precisely, the work presented in this paper is unique in the following aspects: 

(i) Efficient linear and fully decoupled scheme for this highly coupled nonlinear system is proposed by introducing auxiliary variables $\textbf{w}$ and $\textbf{v}$, and using the implicit-explicit discretization. The constructed scheme only requires solving a sequence of linear elliptic equations at each time step; 

(ii) We first derive the positivity preserving properties for the discrete porosity and its difference quotient in time, and then handle with the complication resulted from the fully coupling relation of multivariables, including porosity, pressure, velocity, solute concentration and temperature by establishing several auxiliary lemmas.

(iii) The optimal error analysis for the velocity, pressure, concentration, porosity and temperature in different norms is established.  We believe that our error analysis for the constructed fully decoupled and linear scheme is the first work. 

The paper is organized as follows. In Section 2 we describe mathematical model. In Section 3 we construct finite difference method on staggered grids. In Section 4 we carry out error estimates for the discrete scheme. In Section 5, we present numerical experiments in two- and three-dimensional cases to verify our theoretical results and illustrate the capabilities of the constructed method. In Section 6 we give some concluding remarks.
 
 \section{Mathematical model}
  In this paper, we consider a heat-transfer model to describe the temperature behavior for  wormhole propagation by using the two-scale continuum model \cite{li2017simulation,kalia2010fluid,panga2005two}, which is established by coupling local pore-scale phenomena to macroscopic variables (Darcy velocity, pressure and concentration) through structure-property relationships (permeability-porosity, interfacial area-porosity, and so on).
 \subsection{Darcy scale model}
 The Darcy scale model equations are given by 
\begin{numcases}{}
\gamma\frac{\partial p}{\partial t}+\frac{\partial \phi}{\partial t}+\nabla\cdot \textbf{u}=
f,\ \ \textbf{x}\in\Omega,\ t\in J,\label{e_Darcy1} \\
\textbf{u}=-\dfrac{K(\phi)}{\mu}\nabla p,~~~\textbf{x}\in\Omega,~t\in J,\label{e_Darcy2} \\
\dfrac{\partial (\phi c_f)}{\partial t}+\nabla\cdot (\textbf{u}c_f)
-\nabla\cdot (\phi\textbf{D} \nabla c_f)=k_ca_v(c_s-c_f)+f_Pc_f+f_Ic_I,
\ \ \textbf{x}\in\Omega, \ t\in J,\label{e_Darcy3} \\
\frac{\partial \phi}{\partial t}=\frac{ R(c_f,T) a_v \alpha }{\rho_s}, \ \ \textbf{x}\in\Omega, \ t\in J,\label{e_Darcy4} \\
R(c_f,T) = k_c (c_f-c_s), \ \ \textbf{x}\in\Omega, \ t\in J,\label{e_Darcy5}
\end{numcases} 
where $\Omega$ is an open bounded domain. $J=(0,Q]$, and $Q$ denotes the final time. $p$ is the pressure, $\mu$ is the fluid viscosity, $\textbf{u}$ is the Darcy velocity of the fluid, $f=f_I+f_P$,
$f_P$ and $f_I$ are production and injection rates respectively. $\gamma$ is a pseudo-compressibility parameter
that results in slight change of the density of the fluid phase in the dissolution process. $\phi$ and $K$ are the porosity and permeability of the rock respectively,
$c_f$ is the cup-mixing concentration of the acid in the fluid phase. $c_I$ is the injected concentration.
For simplicity we assume that diffusion coefficient $\textbf{D}(\textbf{x})=d_{mol}\textbf{I}
=diag(D_{ll}),~l=1,2$ is diagonal matrix in the following.
$k_c$ is the local mass-transfer coefficient, $a_v$ is the interfacial area available for reaction per unit volume of the medium.
The variable $c_s$ is the concentration of the acid at the fluid-solid interface, and the relationship between $c_f$ and $c_s$ can be described as follows.
\begin{equation}\label{e_Darcy6}
c_s=\frac{c_f}{1+k_s(T)/k_c},
\end{equation}
where the surface reaction rate $k_s$ is a function of the temperature \cite{wu2021thermodynamically,li2017simulation}, no longer deemed as a constant in \cite{li2019superconvergence}.   Here we assume that $ k_{s1}< k_s(T) < k_{s2}$ and $k_s(T)$ is Lipschitz continuous for simplicity.
$\alpha$ is the dissolving power of the acid and $\rho_s$ is the density of the solid phase.

 \subsection{Pore scale model}
The pore-scale model uses structure property relations to describe the changes in permeability and interfacial surface area as dissolution occurs. The relationship between the permeability and the porosity is established by the Carman-Kozeny correlation:
 \begin{equation}\label{e_Pore1}
\frac{K}{K_0}=\frac{\phi}{\phi_0}\left(\frac{\phi(1-\phi_0)}{\phi_0(1-\phi)}\right)^2,
\end{equation}
where $\phi_0$ and $K_0$ are the initial porosity and permeability of the rock respectively.  Using porosity and permeability, $a_v$ is shown as
\begin{equation}\label{e_Pore2}
\frac{a_v}{a_0}=\frac{\phi}{\phi_0}\sqrt{\frac{K_0\phi}{K\phi_0}},
\end{equation}
where $a_0$ is the initial interfacial area.
 
 \subsection{Heat-transfer model}
 In this paper, a heat-transfer model is introduced to determine
the temperature behavior during wormhole propagation, which considers heat conduction, heat convection and reaction heat \cite{li2017simulation, wu2021thermodynamically}. 
\begin{equation}\label{e_Heat1}
\frac{ \partial [ \left( \rho_s(1-\phi) \theta_{s} + \rho_f  \phi \theta_{f} \right) T ]}{ \partial t } +  \nabla \cdot (\rho_f \theta_{f} \textbf{u}  T) = \nabla \cdot ( \lambda(\phi) \nabla T ) + a_v(\phi) H_r(T)R(c_f, T) ,
\end{equation}
where $T$ is the temperature, $\rho_s$ and $\rho_f$ are the density of rock and acid respectively. $\theta_{s}$ and $\theta_{f}$ are the heat capacities of rock and acid respectively. The average thermal conductivity between acid solution and rock $\lambda(\phi)= (1-\phi) \lambda_s + \phi \lambda_f $, where $ \lambda_s $ and $\lambda_f $ are the thermal conductivities of rock and acid respectively.   Here we assume that the reaction heat $H_r(T)$ is Lipschitz continuous for simplicity.
\eqref{e_Heat1} establishes the heat transmission process during acid
injection. The first term in this equation describes the variation of temperature over time, the second term represents thermal convection due to the acid flow during wormhole propagation. The first term on the right hand side describes thermal conduction, and the last term represents the reaction heat generation rate. 

In this paper, we assume that the temperature of the acid and matrix can be represented by a single notation $T$, since the speed of heat transfer is much faster than the fluid speed. i. e. the temperature of the acid and matrix is the same when acid is injected into the matrix. Differentiating between the acid temperature and matrix temperature would cause great difficulty to theories and applications, and the details of the heat transfer between the acid and matrix must be researched carefully. In fact, due to the geothermal factor, the initial matrix temperature may be higher than the injected acid temperature. Relevant work can be left to the future.

 \subsection{Boundary and initial conditions}
 In this paper, the boundary and initial conditions are as follows.
\begin{equation}\label{e5}
  \left\{
   \begin{array}{l}
 \textbf{u}\cdot \textbf{n}=0,\ \phi\textbf{D}\nabla c_f \cdot \textbf{n}=0, \ \lambda \nabla T \cdot \textbf{n}=0, \ \ \textbf{x}\in \partial\Omega,\ t \in J,\\
p(\textbf{x},0)=p_0(\textbf{x}),\ \ \textbf{x}\in\Omega,\\
c_f(\textbf{x},0)=c_{f0}(\textbf{x}), \ \ \textbf{x}\in\Omega,\\
\phi(\textbf{x},0)=\phi_0(\textbf{x}), \ \ \textbf{x}\in\Omega, \\
T (\textbf{x},0)= T_0(\textbf{x}), \ \ \textbf{x}\in\Omega,
   \end{array}
   \right.
  \end{equation}
  where $\textbf{n}$ is the unit outward normal vector of the domain $\Omega$.

 \section{Finite difference method on staggered grids} 
 In this section, we consider the finite difference method for the coupled system on staggered grids.  
To fix the idea, we consider $\Omega=(L_{lx},L_{rx})\times (L_{ly},L_{ry})$. Three dimensional rectangular domains can be dealt with similarly. The grid points are denoted by
$$(x_{i+1/2},y_{j+1/2}),~~~i=0,...,N_x,~~j=0,...,N_y,$$
and the notations similar to those in \cite{Weiser1988} are used.
\begin{equation*}
\aligned
&x_{i}=(x_{i-\frac{1}{2}}+x_{i+\frac{1}{2}})/2,~~~i=1,...,N_x,\\
&h_{i}^x=x_{i+\frac{1}{2}}-x_{i-\frac{1}{2}},~~~i=1,...,N_x, \\
&h_{i+\frac{1}{2}}^x=x_{i+1}-x_{i}=(h_{i}^x+h_{i+1}^x)/2,~~~i=1,...,N_x-1,\\
&y_{j}=(y_{j-\frac{1}{2}}+y_{j+\frac{1}{2}})/2,~~~j=1,...,N_y,\\
&h_{j}^y=y_{j+\frac{1}{2}}-y_{j-\frac{1}{2}},~~~j=1,...,N_y,\\
&h_{j+\frac{1}{2}}^y=y_{j+1}-y_{j}=(h_{j}^y+h_{j+1}^y)/2,~~~j=1,...,N_y,\\
&h=\max\limits_{i,j}\{h_{i}^x,h_j^y\}.
\endaligned
\end{equation*}
Let $g_{i,j},~g_{i+\frac{1}{2},j},~g_{i,j+\frac{1}{2}}$ denote $g(x_{i},y_{j}),~g(x_{i+\frac{1}{2}},y_{j}),~g(x_{i},y_{j+\frac{1}{2}}). $ Define the discrete inner products and norms as follows,
\begin{equation*}
\aligned
&(f,g)_{M}=\sum\limits_{i=1}^{N_{x}}\sum\limits_{j=1}^{N_{y}}h_{i}^xh_{j}^yf_{i,j}g_{i,j},\\
&(f,g)_{x}=\sum\limits_{i=1}^{N_{x}-1}\sum\limits_{j=1}^{N_{y}}h_{i+\frac{1}{2}}^xh_{j}^yf_{i+\frac{1}{2},j}g_{i+\frac{1}{2},j},\\
&(f,g)_{y}=\sum\limits_{i=1}^{N_{x}}\sum\limits_{j=1}^{N_{y}-1}h_{i}^xh_{j+\frac{1}{2}}^yf_{i,j+\frac{1}{2}}g_{i,j+\frac{1}{2}},\\
&(\textit{\textbf{v}},\textit{\textbf{r}})_{TM}=(v^x,r^x)_{x}+(v^y,r^y)_{y}.
\endaligned
\end{equation*}
For simplicity from now on we always
omit the superscript $n$ if the omission does not cause conflicts.
Define
\begin{equation*}
\aligned
&[d_{x}g]_{i+\frac{1}{2},j}=(g_{i+1,j}-g_{i,j})/h_{i+\frac{1}{2}}^x,\\
&[d_{y}g]_{i,j+\frac{1}{2}}=(g_{i,j+1}-g_{i,j})/h_{j+\frac{1}{2}}^y,\\
&[D_{x}g]_{i,j}=(g_{i+\frac{1}{2},j}-g_{i-\frac{1}{2},j})/h_{i}^x,\\
&[D_{y}g]_{i,j}=(g_{i,j+\frac{1}{2}}-g_{i,j-\frac{1}{2}})/h_{j}^y,\\
&[d_{t}g]^n_{i,j}=(g_{i,j}^n-g_{i,j}^{n-1})/\Delta t.
\endaligned
\end{equation*}
 For simplicity we only consider the case that $h_{i+1/2}=h,\ k_{j+1/2}=k$, i.e. uniform meshes are used both in $x$ and $y$-directions. 
 
 Define $\textbf{w}=(w^x,w^y)=\textbf{u}c_f - \phi\textbf{D}\nabla c_f $ and $\textbf{v}=(v^x,v^y)= \rho_f \theta_{f} \textbf{u}  T - \lambda(\phi) \nabla T $, then  (\ref{e_Darcy3}) and (\ref{e_Heat1}) can be transformed into
  \begin{equation} \label{e_Darcy3_transform}
\aligned
& \dfrac{\partial (\phi c_f)}{\partial t}+\nabla\cdot \textbf{w} =k_c a_v(\phi)
\left( \frac{1}{ 1+ k_s(T) /k_c } -1 \right) c_f
+f_Pc_f+f_Ic_I,
\endaligned
\end{equation}
and  
 \begin{equation}\label{e_Heat1_transform}
\frac{ \partial [ \left( \rho_s(1-\phi) \theta_{s} + \rho_f  \phi \theta_{f} \right) T ]}{ \partial t } +  \nabla \cdot \textbf{v} = a_v(\phi) H_r(T)R(c_f, T) .
\end{equation}

 Set  $\Delta t=Q/N,\ t^n=n\Delta t,$  for $ n\leq N $, 
and define
$ \displaystyle [d_{t}f]^n=\frac{f^n-f^{n-1}}{\Delta t}$. Denote by $\{\Psi^n, P^n, \textbf{U}^n,  C_f^n,  \textbf{W}^n, Z^n,  \textbf{V}^n \}_{n=1}^{N}$, the approximations to $\{\phi^n, p^n, \textbf{u}^n, c_f^n, \textbf{w}^n , T^n, \textbf{v}^n\}_{n=1}^{N}$ respectively, with the boundary approximations 
\begin{equation}\label{e_boundary}
  \left\{
   \begin{array}{l}
   \displaystyle U_{1/2,j}^{x,n}=U_{N_x+1/2,j}^{x,n}=0,\quad 1\leq j\leq N_y,\\
   \displaystyle U_{i,1/2}^{y,n}=U_{i,N_y+1/2}^{y,n}=0,\quad 1\leq i\leq N_x,\\
   \displaystyle W_{1/2,j}^{x,n}=W_{N_x+1/2,j}^{x,n}=0,\quad 1\leq j\leq N_y,\\
   \displaystyle W_{i,1/2}^{y,n}=W_{i,N_y+1/2}^{y,n}=0,\quad 1\leq i\leq N_x,\\
   \displaystyle V_{1/2,j}^{x,n}=V_{N_x+1/2,j}^{x,n}=0,\quad 1\leq j\leq N_y,\\
   \displaystyle V_{i,1/2}^{y,n}=V_{i,N_y+1/2}^{y,n}=0,\quad 1\leq i\leq N_x,
   \end{array}
   \right.
  \end{equation}
and the initial approximations for $1\leq i\leq N_x,1\leq j\leq N_y$, 
\begin{equation}\label{e_initial}
  \left\{
   \begin{array}{l}
   \displaystyle P_{i,j}^0=p_{0,i,j}, \  C_{f,i,j}^0=c_{f0,i,j}, \\
   \displaystyle Z_{i,j}^0=T_{0,i,j}, \  \Psi_{i,j}^0=\phi_{0,i,j} .
   \end{array}
   \right.
  \end{equation}
Then, the  fully discrete  scheme based on the finite difference method on staggered grids is as follows: 
\begin{subequations}\label{e_model_fully_discrete}
    \begin{align}
    & \gamma [d_t P]^{n+1}_{i,j} + [d_t \Psi ]^{n+1}_{i,j} + [D_xU]^{x,n+1}_{i,j} +  [D_yU]^{y,n+1}_{i,j} = f^{n+1}_{i,j},  \label{e_model_fully_discrete_Darcy} \\   
    & U^{x,n+1}_{i+1/2,j} = - \frac{K(\Pi_h \Psi^{n+1}_{i+1/2,j} ) }{ \mu } [d_x P]^{n+1}_{i+1/2,j} , \ \  U^{y,n+1}_{i,j+1/2} = - \frac{K(\Pi_h \Psi^{n+1}_{i,j+1/2} ) }{ \mu } [d_y P]^{n+1}_{i,j+1/2} ;
     \label{e_model_fully_discrete_Velocity} \\  
    & [d_t (\Psi C_f) ]^{n+1}_{i,j} +  [D_x W]^{x,n+1}_{i,j} +  [D_y W]^{y,n+1}_{i,j} \notag \\
    & \ \ \ \ \ \ \ \ \ \ \ \ \ \ \ 
    = k_c a_v( \Psi^{n+1}_{i,j} ) \left( \frac{1}{ 1+ k_s(Z^n_{i,j} ) /k_c } -1 \right) C^{n+1}_{f,i,j} + f_P^{n+1} C^{n+1}_{f,i,j} + f_I^{n+1} c_I^{n+1},      \label{e_model_fully_discrete_concentration} \\  
        & W^{x,n+1}_{i+1/2,j} = U^{x,n+1}_{i+1/2,j} \Pi_h C^{n+1}_{f,i+1/2,j} - \Pi_h \Psi^{n+1}_{i+1/2,j} D_{11} [d_x C_f]^{n+1}_{i+1/2,j}, \notag \\
        & W^{y,n+1}_{i,j+1/2}  = U^{y,n+1}_{i,j+1/2}  \Pi_h C^{n+1}_{f,i,j+1/2}  - \Pi_h \Psi^{n+1}_{i,j+1/2}  D_{22} [d_y C_f ]^{n+1}_{i,j+1/2} ;      \label{e_model_fully_discrete_auxiliary W} \\  
     &  [ d_t \big( \left(  \rho_s(1- \Psi) \theta_{s} + \rho_f \Psi \theta_{f} \right) Z \big) ]^{n+1}_{i,j} +  [D_xV]^{x,n+1}_{i,j} +  [D_yV]^{y,n+1}_{i,j} = a_v( \Psi^{n+1}_{i,j} ) H_r(Z^n_{i,j}) R(  \overline{C}^{n+1}_{f,i,j},  Z^n_{i,j} ),   \label{e_model_fully_discrete_Temperature} \\  
         & V^{x,n+1}_{i+1/2,j} = \rho_f \theta_{f} U^{x,n+1}_{i+1/2,j} \Pi_h Z^{n+1}_{i+1/2,j} - \lambda ( \Pi_h \Psi^{n+1}_{i+1/2,j}) [d_x Z]^{n+1}_{i+1/2,j}, \notag \\ 
        & V^{y,n+1}_{i,j+1/2}  = \rho_f \theta_{f} U^{y,n+1}_{i,j+1/2}  \Pi_h Z^{n+1}_{i,j+1/2}  - \lambda ( \Pi_h \Psi^{n+1}_{i,j+1/2}  ) [d_y Z]^{n+1}_{i,j+1/2}  ; \label{e_model_fully_discrete_auxiliary V} 
\end{align}
\end{subequations}   
where $\Pi_h$ is an interpolation operator with second-order or higher precision. 

Using \eqref{e_Darcy4}-\eqref{e_Pore2}, we have 
\begin{equation}\label{e_model_porosity}
\frac{ \partial \phi }{ \partial t } =  \frac{ \alpha k_c a_0 }{ \rho_s } \left( 1-  \frac{ 1 }{ 1+ k_s(T)/k_c } \right) \frac{ 1-\phi }{ 1- \phi_0 } c_f.
\end{equation}
For the calculation of the discrete porosity $\Psi$, we use the following scheme.
\begin{equation}\label{e_model_fully_discrete_porosity}
  [d_t \Psi ]^{n+1}_{i,j} = \frac{ \alpha k_c a_0 }{ \rho_s } \left( 1- \frac{ 1}{1+ k_s(Z^n_{i,j} )/k_c } \right) \frac{ 1- \Psi^{n+1}_{i,j} }{ 1-\Psi^0_{i,j} } \overline{ C }^{n}_{f,i,j}, 
\end{equation}
where $\overline{ C }^{n}_{f,i,j} =\max\left\{0,\min\{ C^{n}_{f,i,j} ,1\}\right\}.$

 The difference method will consist of four parts: 
 
(\romannumeral1) If the approximate concentration $C_{f,i,j}^{n}$ and porosity $\Psi_{i,j}^{n},\ n=0,\cdots,N-1$
are known, equation (\ref{e_model_fully_discrete_porosity}) will be used to obtain
a new porosity $\Psi_{i,j}^{n+1}$. 

(\romannumeral2) By using difference scheme (\ref{e_model_fully_discrete_Darcy}) and (\ref{e_model_fully_discrete_Velocity}), an approximation $P_{i,j}^{n+1}$
to the pressure will be calculated using $\Psi_{i,j}^{n+1}$, and then the approximate velocity $U^{x,n+1}_{i+1/2,j}$ and $U^{y,n+1}_{i,j+1/2}$ will be evaluated. 

(\romannumeral3) A new concentration $C_{f,i,j}^{n+1}$ will be calculated using $U^{x,n+1}_{i+1/2,j}$, $U^{y,n+1}_{i,j+1/2}$,  $Z_{i,j}^{n}$ and $\Psi_{i,j}^{n+1}$ in \eqref{e_model_fully_discrete_concentration}-\eqref{e_model_fully_discrete_auxiliary W}, then we get the approximations $W^{x,n+1}_{i+1/2,j}$ and $W^{y,n+1}_{i,j+1/2}$ by using  \eqref{e_model_fully_discrete_auxiliary W}. 

(\romannumeral4)
 A new temperature $Z_{i,j}^{n+1}$ will be calculated in \eqref{e_model_fully_discrete_Temperature} by using $U^{x,n+1}_{i+1/2,j}$, $U^{y,n+1}_{i,j+1/2}$, $C_{f,i,j}^{n+1}$ and $\Psi_{i,j}^{n+1}$, then we get the approximations $V^{x,n+1}_{i+1/2,j}$ and $V^{y,n+1}_{i,j+1/2}$ in \eqref{e_model_fully_discrete_auxiliary V}. 
 
 It is easy to see
that at each time level, the difference scheme has an explicit solution or is a linear
pentadiagonal system with strictly diagonally dominant coefficient matrix, thus the approximate solutions exist uniquely.
   
\section{Error Analysis for the Discrete Scheme}
In this section, we give the error estimates for the fully discrete scheme \eqref{e_model_fully_discrete}-\eqref{e_model_fully_discrete_porosity}.
Set
\begin{equation}\label{Err1}
  \left\{
   \begin{array}{ll}
   \displaystyle
   E_p=P-p, & E_{ \textbf{u} }=(E_{ \textbf{u} }^{x},E_{ \textbf{u} }^{y})=\textbf{U}-\textbf{u},\\
   E_{c_f}= C_f -c_f,&  E_{\textbf{w}}=(E_{\textbf{w}}^{x},E_{\textbf{w}}^{y})=\textbf{W}-\textbf{w},\\
      E_{T}=Z-T, & E_{\textbf{v}}=(E_{\textbf{v}}^{x},E_{\textbf{v}}^{y})=\textbf{V}-\textbf{v},\\
   E_{\phi}=\Psi-\phi.
   \end{array}
   \right.
  \end{equation}
First we present the following lemma which will be used in what follows.
 \medskip
\begin{lemma}\label{le1}
\cite{Weiser1988} Let $q_{i,j},w_{i+1/2,j}^x~and~ w_{i,j+1/2}^y $ be any values such that $w_{1/2,j}^x=w_{N_x+1/2,j}^x=w_{i,1/2}^y=w_{i,N_y+1/2}^y=0$, then
$$(q,D_xw^x)_M=-(d_xq,w^x)_x,$$
$$(q,D_yw^y)_M=-(d_yq,w^y)_y.$$
\end{lemma}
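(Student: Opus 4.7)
The plan is to establish the first identity $(q,D_xw^x)_M=-(d_xq,w^x)_x$ by discrete Abel summation in the $x$-direction; the second identity then follows by an identical argument with the roles of $(i,x,N_x)$ and $(j,y,N_y)$ interchanged.

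First I would expand the left-hand side using the definitions of $(\cdot,\cdot)_M$ and $D_x$, obtaining
\[
(q,D_xw^x)_M = \sum_{i=1}^{N_x}\sum_{j=1}^{N_y} h_i^x h_j^y\, q_{i,j}\,\frac{w^x_{i+1/2,j}-w^x_{i-1/2,j}}{h_i^x} = \sum_{j=1}^{N_y} h_j^y \sum_{i=1}^{N_x} q_{i,j}\bigl(w^x_{i+1/2,j}-w^x_{i-1/2,j}\bigr),
\]
so that the weights $h_i^x$ cancel against the denominator of $D_x$. This cancellation is the key simplification that makes the remaining argument purely algebraic.

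Next I would split the inner sum into two pieces and shift the summation index in the $w^x_{i-1/2,j}$ piece by replacing $i$ with $i+1$, which is the discrete analogue of integration by parts. This shift produces two boundary contributions, namely the term at $i=0$ proportional to $w^x_{1/2,j}$ and the term at $i=N_x$ proportional to $w^x_{N_x+1/2,j}$, both of which vanish by the hypothesis. The remaining interior terms rearrange into
\[
\sum_{j=1}^{N_y} h_j^y \sum_{i=1}^{N_x-1} (q_{i,j}-q_{i+1,j})\, w^x_{i+1/2,j} = -\sum_{j=1}^{N_y} h_j^y \sum_{i=1}^{N_x-1} h_{i+1/2}^x\,[d_x q]_{i+1/2,j}\, w^x_{i+1/2,j},
\]
where in the last equality I have multiplied and divided by $h_{i+1/2}^x$ in order to recognize $[d_x q]_{i+1/2,j}$.

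The expression on the right is exactly $-(d_x q, w^x)_x$ by the definition of the $(\cdot,\cdot)_x$ inner product, which proves the first identity. The $y$-direction identity is established by repeating the argument verbatim with indices swapped. There is no genuine obstacle here; the only bookkeeping that must be done carefully is the index shift, which has to line up so that the two boundary values it produces match exactly the two homogeneous conditions imposed on $w^x$ (respectively $w^y$) in the statement of the lemma.
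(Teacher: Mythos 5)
Your argument is correct. The paper itself gives no proof of this lemma---it is quoted directly from the cited reference \cite{Weiser1988}---and the summation-by-parts computation you supply (cancelling $h_i^x$ against the denominator of $D_x$, shifting the index in the $w^x_{i-1/2,j}$ sum, and observing that the two resulting boundary terms are exactly the ones annihilated by the hypotheses $w^x_{1/2,j}=w^x_{N_x+1/2,j}=0$) is precisely the standard derivation of this discrete Green's identity; the index bookkeeping and the identification of the remaining sum with $-(d_xq,w^x)_x$ are all accurate.
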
  
Next we will prove a priori bounds for the discrete solution $\Psi$ which will be used in what follows.
 \medskip
\begin{lemma}\label{le_porosity_Err1}
Assuming that $0<\phi_{0*}\leq \phi_0\leq \phi_{0}^*<1$, then the discrete porosity $\Psi_{i,j}^n$ is bounded,
 i.e.,
\begin{equation}\label{porosity_Err2}
\aligned
\phi_{0*}\leq \Psi_{i,j}^n<1,\ \ 0\leq i\leq N_x, \ 0\leq j\leq N_y,\ n\leq N.
\endaligned
\end{equation}
It also holds that
 \begin{equation}\label{porosity_Err3}
\aligned
0\leq [d_t\Psi]_{i,j}^n< \frac{ \alpha k_c a_0}{ \rho_s (1- \phi_{0}) }, \ \ 0\leq i\leq N_x, \ 0\leq j\leq N_y, \ n\leq N.
\endaligned
\end{equation}
\end{lemma}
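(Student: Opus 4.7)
The plan is to rewrite \eqref{e_model_fully_discrete_porosity} as an explicit update formula for $\Psi^{n+1}_{i,j}$ and then proceed by induction on $n$. Introduce, for brevity, the nonnegative quantity
$$a^n_{i,j} \;=\; \Delta t\,\frac{\alpha k_c a_0}{\rho_s}\,\sigma^n_{i,j}\,\frac{\overline{C}^n_{f,i,j}}{1-\Psi^0_{i,j}},\qquad \sigma^n_{i,j}\;=\;1-\frac{1}{1+k_s(Z^n_{i,j})/k_c}.$$
The hypothesis $k_{s1}<k_s(T)<k_{s2}$ with $k_{s1}>0$ gives $\sigma^n_{i,j}\in(0,1)$, the truncation $\overline{C}^n_{f,i,j}\in[0,1]$ by construction, and $\phi_{0*}\leq\Psi^0_{i,j}\leq\phi_0^*<1$ keeps the denominator strictly positive. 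Hence $a^n_{i,j}\geq0$ uniformly, and \eqref{e_model_fully_discrete_porosity} becomes $\Psi^{n+1}_{i,j}-\Psi^n_{i,j}=a^n_{i,j}(1-\Psi^{n+1}_{i,j})$, which I would solve algebraically to get
$$\Psi^{n+1}_{i,j}\;=\;\frac{\Psi^n_{i,j}+a^n_{i,j}}{1+a^n_{i,j}},\qquad \Psi^{n+1}_{i,j}-\Psi^n_{i,j}\;=\;\frac{a^n_{i,j}(1-\Psi^n_{i,j})}{1+a^n_{i,j}}.$$

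From this closed form the induction is essentially immediate. For the upper bound, if $\Psi^n_{i,j}<1$ then $\Psi^{n+1}_{i,j}<(1+a^n_{i,j})/(1+a^n_{i,j})=1$, which propagates the strict inequality $\Psi^n_{i,j}<1$ from the initial datum $\Psi^0_{i,j}\leq\phi_0^*<1$. For the lower bound, the identity for $\Psi^{n+1}_{i,j}-\Psi^n_{i,j}$ combined with $\Psi^n_{i,j}<1$ shows that the sequence is nondecreasing in $n$, so $\Psi^n_{i,j}\geq\Psi^0_{i,j}\geq\phi_{0*}$. Together these give \eqref{porosity_Err2}.

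For \eqref{porosity_Err3}, I would read the bounds directly off \eqref{e_model_fully_discrete_porosity}. Nonnegativity is obvious: every factor on the right-hand side ($\alpha,k_c,a_0,\rho_s,\sigma^n_{i,j},\overline{C}^n_{f,i,j},1-\Psi^{n+1}_{i,j},1-\Psi^0_{i,j}$) is nonnegative, with $1-\Psi^{n+1}_{i,j}>0$ by the part already proved. For the strict upper bound, estimate each factor: $\sigma^n_{i,j}<1$, $\overline{C}^n_{f,i,j}\leq 1$, $1-\Psi^{n+1}_{i,j}<1$, and $1-\Psi^0_{i,j}\geq 1-\phi_0^*$, so that
$$[d_t\Psi]^{n+1}_{i,j}\;<\;\frac{\alpha k_c a_0}{\rho_s\,(1-\Psi^0_{i,j})}\;\leq\;\frac{\alpha k_c a_0}{\rho_s(1-\phi_0^*)},$$
which is exactly \eqref{porosity_Err3} (with the $\phi_0$ in the statement understood as the uniform upper bound $\phi_0^*$).

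There is no real obstacle here: the only mildly delicate point is that the scheme is implicit in $\Psi^{n+1}$, so one must solve it for $\Psi^{n+1}$ before bounds in terms of known quantities can be read off. Once the explicit form above is written, the induction is routine, and it is important for later sections that the bounds are independent of $\Delta t$ and $h$, which this proof delivers. Note also that no CFL-type restriction on $\Delta t$ is needed, thanks to the implicit treatment of the $(1-\Psi^{n+1}_{i,j})$ factor.
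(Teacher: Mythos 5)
Your proposal is correct and follows essentially the same route as the paper: the paper also sets $\beta^n_{i,j}=\Delta t\,\frac{\alpha k_c a_0}{\rho_s}\bigl(1-\frac{1}{1+k_s(Z^n_{i,j})/k_c}\bigr)\frac{\overline{C}^n_{f,i,j}}{1-\Psi^0_{i,j}}$, rewrites the scheme as $\Psi^{n+1}_{i,j}=\frac{\beta^n_{i,j}+\Psi^n_{i,j}}{1+\beta^n_{i,j}}$ and $\Psi^{n+1}_{i,j}-\Psi^n_{i,j}=\beta^n_{i,j}(1-\Psi^{n+1}_{i,j})$, and runs the same induction to get $\Psi^{n+1}_{i,j}<1$, monotonicity, and the bounds on $[d_t\Psi]^{n+1}_{i,j}$. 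Your only (harmless) refinement is noting that $\beta^n_{i,j}$ may vanish when $\overline{C}^n_{f,i,j}=0$, consistent with the non-strict lower bound $0\leq[d_t\Psi]^{n}_{i,j}$ in the statement.
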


\begin{proof}
The proof is given by induction. It is trivial that $\phi_{0*}\leq \Psi_{i,j}^0<1$.
Suppose that $$\phi_{0*}\leq \Psi_{i,j}^{n-1}<1, \ n\leq N,$$ next we prove that $\Psi_{i,j}^{n}$ also does.

Set $ \displaystyle \beta^{n} = \frac{ \alpha k_c a_0 }{ \rho_s } \left( 1- \frac{ 1}{1+ k_s(Z^n)/k_c } \right) \frac{1}{1-\Psi^0} \overline{ C }_f^{n} \Delta t $ for simplicity. Then we can easily obtain that $ \displaystyle 0<\beta^{n}< \frac{ \alpha k_c a_0}{ \rho_s (1- \phi_{0}) } \Delta t $. By using the definition of $\beta^n$, we can transform \eqref{e_model_fully_discrete_porosity} into the following.
\begin{equation}\label{porosity_Err4}
\aligned
\Psi_{i,j}^{n+1}=\frac{\beta_{i,j}^{n}}{1+\beta_{i,j}^{n}}+\frac{\Psi_{i,j}^{n}}{1+\beta_{i,j}^{n}},
\endaligned
\end{equation}
where we can easily obtain that $\Psi_{i,j}^{n+1}<1$.

Since \eqref{e_model_fully_discrete_porosity} also can be recast as
\begin{equation}\label{porosity_Err5}
\aligned
\Psi_{i,j}^{n+1}-\Psi_{i,j}^{n}=\beta_{i,j}^{n}(1-\Psi_{i,j}^{n+1} ).
\endaligned
\end{equation}
Thus we have that $\Psi_{i,j}^{n+1}>\Psi_{i,j}^{n},$ which leads to the desired results.
\end{proof}

 \medskip
\begin{lemma}\label{le_error_porosity}
 The approximate errors of discrete porosity satisfy
\begin{equation}\label{porosity_Err14}
\aligned
\|E_{\phi}^{m+1} \|^2_{M} \leq & C \Delta t \sum\limits_{n=0}^m  \|E_{c_f}^{n}\|_{M}^2+ C \Delta t \sum\limits_{n=0}^m  \|E_{T}^{n}\|_{M}^2 \\
&+ C \Delta t\sum\limits_{n=0}^m \|E_{\phi}^{n+1}\|_{M}^2+ C (\Delta t)^2, \ \ m \leq N-1,
\endaligned
\end{equation}

\begin{equation}\label{porosity_Err15}
\aligned
\Delta t \sum\limits_{n=0}^m \|d_tE_{\phi}^{n+1} \|_{M}^2 \leq & C \Delta t \sum\limits_{n=0}^m  \|E_{c_f}^{n}\|_{M}^2+ C \Delta t \sum\limits_{n=0}^m  \|E_{T}^{n}\|_{M}^2 \\
&+ C \Delta t\sum\limits_{n=0}^m \|E_{\phi}^{n+1}\|_{M}^2+ C(\Delta t)^2, \ \ m \leq N-1,
\endaligned
\end{equation}
where the positive constant $C$ is independent of $h, k$ and $\Delta t$.
\end{lemma}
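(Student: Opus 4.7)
The plan is to derive a pointwise error equation for $[d_t E_\phi]^{n+1}_{i,j}$ at each node, bound each source of error using the Lipschitz assumptions and the a priori bounds of Lemma \ref{le_porosity_Err1}, and then pass to discrete $L^2$ estimates in space and time.

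First I would evaluate \eqref{e_model_porosity} at $(x_i,y_j,t^{n+1})$ and Taylor-expand in time. The smoothness of $\phi$, $c_f$, $T$ and the Lipschitz continuity of $k_s$ allow me to rewrite the continuous relation in a form that matches the discrete scheme \eqref{e_model_fully_discrete_porosity} up to a consistency error:
\[
[d_t \phi]^{n+1}_{i,j} = \frac{\alpha k_c a_0}{\rho_s}\left(1 - \frac{1}{1+k_s(T^n_{i,j})/k_c}\right)\frac{1-\phi^{n+1}_{i,j}}{1-\phi^0_{i,j}}\,c^n_{f,i,j} + R^{n+1}_{\phi,i,j},
\]
with $|R^{n+1}_{\phi,i,j}|\le C\Delta t$ pointwise. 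Subtracting \eqref{e_model_fully_discrete_porosity} yields
\[
[d_t E_\phi]^{n+1}_{i,j} = \frac{\alpha k_c a_0}{\rho_s}\bigl(ABC - A'B'C'\bigr) - R^{n+1}_{\phi,i,j},
\]
where $A=1-(1+k_s(Z^n)/k_c)^{-1}$, $B=(1-\Psi^{n+1})/(1-\Psi^0)$, $C=\overline{C}_f^n$, and the primed quantities are the corresponding continuous values.

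Next I would apply the telescoping identity $ABC-A'B'C'=(A-A')BC+A'(B-B')C+A'B'(C-C')$ and estimate each piece separately. All six factors stay uniformly bounded: $B, B'$ by Lemma \ref{le_porosity_Err1} together with $\phi_0^*<1$, $A, A'$ by $k_{s1}\le k_s\le k_{s2}$, and $C, C'$ by the regularity assumption $0\le c_f\le 1$. The Lipschitz property of $k_s$ gives $|A-A'|\le C|E^n_{T,i,j}|$; the exact matching $\Psi^0=\phi^0$ from \eqref{e_initial} gives $|B-B'|=|E^{n+1}_{\phi,i,j}|/(1-\phi^0_{i,j})\le C|E^{n+1}_{\phi,i,j}|$; and the cut-off gives $|C-C'|=|\overline{C}^n_{f,i,j}-c^n_{f,i,j}|\le |E^n_{c_f,i,j}|$ because $c_f^n\in[0,1]$. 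Young's inequality then yields
\[
\bigl([d_t E_\phi]^{n+1}_{i,j}\bigr)^2 \le C\bigl(|E^n_{T,i,j}|^2+|E^n_{c_f,i,j}|^2+|E^{n+1}_{\phi,i,j}|^2\bigr)+C(\Delta t)^2.
\]

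Finally, weighting by $h_i^x h_j^y$, summing over $(i,j)$ and then over $0\le n\le m$ with the factor $\Delta t$, and using $\sum_{n=0}^m\Delta t\,(\Delta t)^2\le Q(\Delta t)^2$ immediately gives \eqref{porosity_Err15}. For \eqref{porosity_Err14}, since $E_\phi^0=0$ by \eqref{e_initial}, I would write $E_\phi^{m+1}=\Delta t\sum_{n=0}^m[d_t E_\phi]^{n+1}$ and apply the discrete Cauchy--Schwarz inequality to bound $\|E_\phi^{m+1}\|_M^2\le Q\cdot\Delta t\sum_{n=0}^m\|[d_t E_\phi]^{n+1}\|_M^2$, into which the previous bound is substituted. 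The main technical obstacle is the clean algebraic decomposition of the nonlinear product so that each piece involves only one of $E_T^n$, $E_{c_f}^n$, $E_\phi^{n+1}$; the uniform positivity of $1+k_s/k_c$, the initial matching $\Psi^0=\phi^0$, and the $L^\infty$ bounds from Lemma \ref{le_porosity_Err1} are precisely what keep every multiplicative coefficient uniformly controlled.
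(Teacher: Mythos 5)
Your proposal is correct, and the core of it -- the telescoping decomposition of the nonlinear product into pieces controlled by $E_T^n$, $E_\phi^{n+1}$ and $\overline{C}^n_f-c_f$, each with uniformly bounded coefficients thanks to Lemma \ref{le_porosity_Err1}, the bounds on $k_s$, and $|\overline{C}^n_f-c_f^n|\le|E_{c_f}^n|$ -- is exactly the decomposition the paper uses in its error equation \eqref{porosity_Err6}. Where you differ is in how the two norm estimates are extracted. The paper runs a standard energy argument twice: it multiplies the error equation by $E_{\phi}^{n+1}hk$ and uses the identity $(d_tE_{\phi}^{n+1},E_{\phi}^{n+1})_M=\frac{\|E_{\phi}^{n+1}\|_M^2-\|E_{\phi}^n\|_M^2}{2\Delta t}+\frac{\Delta t}{2}\|d_tE_{\phi}^{n+1}\|_M^2$ to get \eqref{porosity_Err14}, and then repeats the procedure with the test factor $d_tE_{\phi}^{n+1}hk$ to get \eqref{porosity_Err15}. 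You instead exploit the fact that the porosity equation has no spatial difference operators, so the error equation is a pointwise algebraic identity that can simply be squared; this gives \eqref{porosity_Err15} directly, and \eqref{porosity_Err14} then follows from $E_{\phi}^{m+1}=\Delta t\sum_{n=0}^m d_tE_{\phi}^{n+1}$ (using $E_{\phi}^0=0$) and Cauchy--Schwarz. Your route is slightly more economical and makes transparent why both estimates carry the same right-hand side; the paper's energy formulation is the one that generalizes to the remaining equations of the system (concentration, temperature), where spatial coupling makes pointwise squaring unavailable, which is presumably why the authors keep a uniform template. One small bookkeeping difference: you shift the exact solution to time level $n$ and absorb the shift into the $O(\Delta t)$ consistency term, whereas the paper keeps $T^{n+1}$, $c_f^{n+1}$ and absorbs the shift when bounding $\|T^{n+1}-Z^n\|_M$ and $\|\overline{C}^n_f-c_f^{n+1}\|_M$; both are equivalent under the stated smoothness.
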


\begin{proof}
Subtracting \eqref{e_model_porosity} from \eqref{e_model_fully_discrete_porosity}, we can obtain
\begin{equation}\label{porosity_Err6}
\aligned
d_t E_{\phi,i,j}^{n+1} = & \chi ( \frac{ 1 }{ 1+ k_s(T^{n+1}_{i,j} )/k_c } - 
\frac{ 1}{1+ k_s(Z^n_{i,j} )/k_c } ) \frac{ 1- \Psi^{n+1}_{i,j} }{ 1-\Psi^0_{i,j} } \overline{ C }^{n}_{f,i,j} \\
& + \chi ( 1-  \frac{ 1 }{ 1+ k_s(T^{n+1}_{i,j} )/k_c } ) ( \frac{ 1- \Psi^{n+1}_{i,j} }{ 1-\Psi^0_{i,j} } - \frac{ 1-\phi^{n+1}_{i,j} }{ 1- \phi_{0,i,j} } )  \overline{ C }^{n}_{f,i,j} \\
& + \chi  ( 1-  \frac{ 1 }{ 1+ k_s(T^{n+1}_{i,j} )/k_c } ) \frac{ 1-\phi^{n+1}_{i,j} }{ 1- \phi_{0,i,j} } ( \overline{ C }^{n}_{f,i,j} - c_{f,i,j}^{n+1} ) \\
& + \frac{ \partial \phi }{ \partial t } |^{n+1}_{i,j}- d_t \phi^{n+1}_{i,j},
\endaligned
\end{equation}
where $ \chi = \frac{ \alpha k_c a_0 }{ \rho_s } $.

Multiplying \eqref{porosity_Err6} by $E_{\phi,i,j}^{n+1} h k $ and making summation on $i,j$ for $1 \leq i \leq N_x, \ 1 \leq j \leq N_y $, we have that 
\begin{equation}\label{porosity_Err7}
\aligned
(d_tE_{\phi}^{n+1}, E_{\phi}^{n+1} )_{M}= & \chi  \left( (\frac{ 1 }{ 1+ k_s(T^{n+1} )/k_c } - 
\frac{ 1}{1+ k_s(Z^n )/k_c } ) \frac{ 1- \Psi^{n+1} }{ 1-\Psi^0 } \overline{ C }^{n}_f,  E_{\phi}^{n+1} \right)_{M} \\
& + \chi \left( ( 1-  \frac{ 1 }{ 1+ k_s(T^{n+1} )/k_c } )  ( \frac{ 1- \Psi^{n+1} }{ 1-\Psi^0 } - \frac{ 1-\phi^{n+1} }{ 1- \phi_{0} } )  \overline{ C }^{n}_f,  E_{\phi}^{n+1} \right)_{M} \\
& + \chi  \left(  ( 1-  \frac{ 1 }{ 1+ k_s(T^{n+1} )/k_c } ) \frac{ 1-\phi^{n+1} }{ 1- \phi_{0} } ( \overline{ C }^{n}_f - c_{f}^{n+1} ), E_{\phi}^{n+1} \right)_{M}  \\
& + ( \frac{ \partial \phi^{n+1} }{ \partial t } - d_t \phi^{n+1}, E_{\phi}^{n+1} )_{M} .
\endaligned
\end{equation}
The term on the left side of \eqref{porosity_Err7} can be transformed into
\begin{equation}\label{porosity_Err8}
\aligned
&(d_tE_{\phi}^{n+1}, E_{\phi}^{n+1} )_{M}=\frac{\|E_{\phi}^{n+1} \|_{M}^2- \|E_{\phi}^{n}\|_{M}^2}{2\Delta t}+
\frac{\Delta t}{2}\|d_tE_{\phi}^{n+1} \|_{M}^2.
\endaligned
\end{equation}
Using the Cauchy-Schwarz inequality, the first term on the right side of \eqref{porosity_Err7} can be bounded by
\begin{equation}\label{porosity_Err9}
\aligned
&  \chi  \left( (\frac{ 1 }{ 1+ k_s(T^{n+1} )/k_c } - 
\frac{ 1}{1+ k_s(Z^n )/k_c } ) \frac{ 1- \Psi^{n+1} }{ 1-\Psi^0 } \overline{ C }^{n}_f,  E_{\phi}^{n+1} \right)_{M} \\
\leq & C \| E_{T}^n \|_M^2 + C \| E_{\phi}^{n+1} \|_M^2 + C \|\frac{\partial T }{\partial t}\|_{L^{\infty}(J;
L^{\infty}(\Omega))}^2(\Delta t)^2 .
\endaligned
\end{equation}
Recalling Lemma \ref{le_porosity_Err1}, the second term on the right side of \eqref{porosity_Err7} can be estimated by
\begin{equation}\label{porosity_Err10}
\aligned
& \chi \left( ( 1-  \frac{ 1 }{ 1+ k_s(T^{n+1} )/k_c } )  ( \frac{ 1- \Psi^{n+1} }{ 1-\Psi^0 } - \frac{ 1-\phi^{n+1} }{ 1- \phi_{0} } )  \overline{ C }^{n}_f ,  E_{\phi}^{n+1} \right)_{M}  
\leq  C \| E_{\phi}^{n+1} \|_M^2.
\endaligned
\end{equation}
Using the Cauchy-Schwarz inequality, the third term on the right side of \eqref{porosity_Err7} can be recast as
\begin{equation}\label{porosity_Err11}
\aligned
& \chi  \left(  ( 1-  \frac{ 1 }{ 1+ k_s(T^{n+1} )/k_c } ) \frac{ 1-\phi^{n+1} }{ 1- \phi_{0} } ( \overline{ C }^{n}_f - c_{f}^{n+1} ), E_{\phi}^{n+1} \right)_{M} \\
\leq & C \| E_{c_f}^n \|_M^2 + C \| E_{\phi}^{n+1} \|_M^2 + C \|\frac{\partial c_f}{\partial t}\|_{L^{\infty}(J;
L^{\infty}(\Omega))}^2 (\Delta t)^2 .
\endaligned
\end{equation}
where we used the fact that $|\overline{C}^{n}_f -c_{f}^{n}|\leq |{C}^{n}_f -c_{f}^{n}|.$
The last term on the right side of \eqref{porosity_Err7} can be estimated by
\begin{equation}\label{porosity_Err12}
\aligned
&  ( \frac{ \partial \phi^{n+1} }{ \partial t } - d_t \phi^{n+1}, E_{\phi}^{n+1} )_{M} \leq C \| E_{\phi}^{n+1} \|_M^2 + C \|\frac{\partial^2 \phi }{\partial t^2}\|_{L^{\infty}(J;
L^{\infty}(\Omega))}^2 (\Delta t)^2. 
\endaligned
\end{equation}
Combing \eqref{porosity_Err7} with \eqref{porosity_Err8}-\eqref{porosity_Err12}, multiplying by $2\Delta t$, and summing for $n$ from 0 to $m$, $m \leq N-1$, we have
\begin{equation}\label{porosity_Err13}
\aligned
\|E_{\phi}^{m+1} \|^2_{M} \leq & \|E_{\phi}^{0} \|^2_{M} + C \Delta t \sum\limits_{n=0}^m  \|E_{c_f}^{n}\|_{M}^2+ C \Delta t \sum\limits_{n=0}^m  \|E_{T}^{n}\|_{M}^2 \\
& + C \Delta t\sum\limits_{n=0}^m \|E_{\phi}^{n+1}\|_{M}^2+ C\Delta t^2,
\endaligned
\end{equation}
which leads to the desired result \eqref{porosity_Err14}.

On the other hand, multiplying \eqref{porosity_Err6} by $ d_t E_{\phi,i,j}^{n+1} h k $, making summation on $i,j$ for $1 \leq i \leq N_x, \ 1 \leq j \leq N_y $ and following the similar procedure as \eqref{porosity_Err9}-\eqref{porosity_Err13}, we can easily obtain the desired result \eqref{porosity_Err15}. 
\end{proof}

\begin{lemma}\label{le_error_pressure}
The approximate errors of discrete pressure and velocity satisfy
\begin{equation} \label{pressure_Err1}
\aligned
&  \Delta t \sum\limits_{n=0}^{m} \|d_t E_p^{n+1} \|_{M}^2+ \|E_{ \textbf{u} }^{m+1}\|_{TM}^2 
\leq  C  \Delta t \sum\limits_{n=0}^m  (\|E_{\phi}^{n+1}\|_{M}^2+\|d_tE_{\phi}^{n+1} \|_{M}^2) \\
& \ \ \ \ \ \ \ \ \ \ \ 
+ C   \Delta t \sum\limits_{n=0}^m  \|E_{ \textbf{u} }^{n+1}\|_{TM}^2 + O( (\Delta t)^2+h^4+k^4), \ \ m\leq N-1,
\endaligned
\end{equation}
where the positive constant $C$ is independent of $h,k$ and $\Delta t$.
\end{lemma}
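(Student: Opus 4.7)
The plan is to derive error equations from mass conservation and Darcy's law, then test the pressure error equation with $d_t E_p^{n+1}$ while substituting the velocity error equation into the divergence term, so as to produce the diagonal $\gamma\|d_t E_p^{n+1}\|_M^2$ and, via a weighted telescoping, the final-time velocity norm $\|E_{\textbf{u}}^{m+1}\|_{TM}^2$. Subtracting \eqref{e_Darcy1} from \eqref{e_model_fully_discrete_Darcy} at time level $t^{n+1}$ and using Taylor expansion on the staggered grid yields
\begin{equation*}
\gamma d_t E_p^{n+1} + d_t E_{\phi}^{n+1} + D_x E_{\textbf{u}}^{x,n+1} + D_y E_{\textbf{u}}^{y,n+1} = R_1^{n+1},
\end{equation*}
with $\|R_1^{n+1}\|_M = O(\Delta t + h^2 + k^2)$. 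For the velocity, subtracting \eqref{e_Darcy2} from \eqref{e_model_fully_discrete_Velocity} and splitting via $K(\Pi_h \Psi^{n+1}) - K(\phi^{n+1})$ gives
\begin{equation*}
E_{\textbf{u}}^{x,n+1} + \frac{K(\Pi_h \Psi^{n+1})}{\mu} d_x E_p^{n+1} = R_2^{x,n+1}
\end{equation*}
(and analogously in $y$), where each $R_2$-component is bounded by $C(|E_{\phi}^{n+1}| + h^2 + k^2)$ via Lipschitz continuity of $K(\cdot)$, the second-order accuracy of $\Pi_h$, and smoothness of $p$.

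Next, taking the $(\cdot,\cdot)_M$-product of the pressure error equation with $d_t E_p^{n+1}$ and applying Lemma \ref{le1} gives
\begin{equation*}
\gamma \|d_t E_p^{n+1}\|_M^2 + (d_t E_{\phi}^{n+1}, d_t E_p^{n+1})_M - (E_{\textbf{u}}^{x,n+1}, d_t d_x E_p^{n+1})_x - (E_{\textbf{u}}^{y,n+1}, d_t d_y E_p^{n+1})_y = (R_1^{n+1}, d_t E_p^{n+1})_M.
\end{equation*}
From the velocity error equation, $d_x E_p^{n+1} = -(\mu/K(\Pi_h \Psi^{n+1}))(E_{\textbf{u}}^{x,n+1} - R_2^{x,n+1})$, and the discrete product rule $d_t(a^{n+1} b^{n+1}) = a^{n+1} d_t b^{n+1} + (d_t a^{n+1}) b^n$ converts the principal part of $-(E_{\textbf{u}}^{x,n+1}, d_t d_x E_p^{n+1})_x$ into $(E_{\textbf{u}}^{x,n+1}, (\mu/K(\Pi_h \Psi^{n+1})) d_t E_{\textbf{u}}^{x,n+1})_x$, which is bounded below by
\begin{equation*}
\frac{1}{2\Delta t}\Big[(\mu K(\Pi_h \Psi^{n+1})^{-1} E_{\textbf{u}}^{x,n+1}, E_{\textbf{u}}^{x,n+1})_x - (\mu K(\Pi_h \Psi^{n+1})^{-1} E_{\textbf{u}}^{x,n}, E_{\textbf{u}}^{x,n})_x\Big].
\end{equation*}
Converting the weight on the $n$-th term from $K(\Pi_h \Psi^{n+1})^{-1}$ to $K(\Pi_h \Psi^n)^{-1}$ makes the sum genuinely telescope, at the cost of an $O(\Delta t)\|E_{\textbf{u}}^n\|_{TM}^2$ correction controlled by the Lipschitz continuity of $K$ and the uniform bound on $d_t\Psi$ provided by Lemma \ref{le_porosity_Err1}.

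Multiplying by $2\Delta t$, summing $n=0,\ldots,m$, and using the uniform lower bound $K(\Pi_h \Psi) \geq K_* > 0$ (again from Lemma \ref{le_porosity_Err1}) recovers $\|E_{\textbf{u}}^{m+1}\|_{TM}^2$ on the left. The remaining contributions are controlled by weighted Cauchy--Schwarz: the term $(d_t E_{\phi}^{n+1}, d_t E_p^{n+1})_M$ against $\|d_t E_{\phi}^{n+1}\|_M^2 + \frac{\gamma}{4}\|d_t E_p^{n+1}\|_M^2$; the $R_2$-contributions against $\|E_{\phi}^{n+1}\|_M^2$ plus $O(h^4 + k^4)$; and $(R_1^{n+1}, d_t E_p^{n+1})_M$ against $\frac{\gamma}{4}\|d_t E_p^{n+1}\|_M^2 + O((\Delta t)^2 + h^4 + k^4)$. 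Absorbing the halved $\gamma \|d_t E_p^{n+1}\|_M^2$ on the left then yields exactly \eqref{pressure_Err1}. The main obstacle is the cross term produced by $d_t(K(\Pi_h \Psi^{n+1})^{-1})$ when the product rule is applied: because the nonlinear Darcy coefficient is evaluated at the advanced time level $n+1$, the time-difference does not telescope cleanly, and this correction must be tracked explicitly and retained as the $C \Delta t \sum \|E_{\textbf{u}}^{n+1}\|_{TM}^2$ right-hand-side contribution, ultimately closed by discrete Gronwall once the present estimate is combined with the concentration and temperature error estimates.
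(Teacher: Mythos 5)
The paper does not actually prove this lemma---it defers entirely to Lemma~7 of the cited reference \cite{li2018block}---and your argument is precisely the standard one used there: test the mass-conservation error equation with $d_t E_p^{n+1}$, sum by parts via Lemma~\ref{le1}, substitute the Darcy-law error relation, and telescope the $K^{-1}$-weighted velocity norm while tracking the commutator from the time-difference of the coefficient. One small correction: to extract $\|E_{\textbf{u}}^{m+1}\|_{TM}^2$ from the telescoped quantity $(\mu K(\Pi_h\Psi^{m+1})^{-1}E_{\textbf{u}}^{m+1},E_{\textbf{u}}^{m+1})$ you need an \emph{upper} bound on $K(\Pi_h\Psi)$ (i.e.\ $\Psi$ bounded away from $1$, which Lemma~\ref{le_porosity_Err1} does not supply uniformly and must come from the bootstrap/induction assumption), while the lower bound $K\geq K_*$ you invoke is what controls the $R_2$ and coefficient-difference terms; also note that $d_t$ acting on $R_2^{n+1}$ generates $\|d_tE_\phi^{n+1}\|_M^2$ contributions, which is exactly why that term appears on the right-hand side of \eqref{pressure_Err1}.
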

 
 \begin{proof}
 Since the proof of this lemma shares similar procedures with the proof of Lemma 7 in \cite{li2018block}, we omit the proof for brevity.
 \end{proof}
 
  \medskip
 \begin{lemma}\label{le_error_concentration}
The approximate error of discrete concentration satisfy
\begin{equation} \label{concentration_Err1}
\aligned
\| E_{c_f}^{m+1} \|^2_M
& + \Delta t \sum\limits_{n=0}^{m} ( \| d_x E_{c_f}^{n+1} \|_x^2 + \| d_y E_{c_f}^{n+1} \|_y^2 ) \\
\leq & C \Delta t \sum\limits_{n=0}^{m} \| E_{\phi}^{n+1} \|_M^2 + C \Delta t \sum\limits_{n=0}^{m} \| E_{c_f}^{n+1} \|_M^2 + C \Delta t \sum\limits_{n=0}^{m} \| d_t E_{\phi}^{n+1} \|_M^2  \\
& + C \Delta t \sum\limits_{n=0}^{m} \| E_{\textbf{u}}^{n+1}\|_{TM}^2 + C \Delta t \sum\limits_{n=0}^{m} \| E_{T}^n \|_M^2 + C (h^4+k^4+ (\Delta t)^2 ), \ \ m\leq N-1,
\endaligned
\end{equation}
where the positive constant $C$ is independent of $h,k$ and $\Delta t$.
\end{lemma}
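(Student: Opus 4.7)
The plan is to subtract the continuous flux equation \eqref{e_Darcy3_transform} from the discrete concentration step \eqref{e_model_fully_discrete_concentration}, test the resulting error equation with $E_{c_f}^{n+1}$ in $(\cdot,\cdot)_M$, and apply Lemma \ref{le1} to move $D_x E_{\textbf{w}}^x$ and $D_y E_{\textbf{w}}^y$ onto $d_x E_{c_f}^{n+1}$ and $d_y E_{c_f}^{n+1}$. This leaves three groups of terms to control: the time-derivative pairing $(d_t(\Psi C_f-\phi c_f)^{n+1}, E_{c_f}^{n+1})_M$, the flux--gradient pairing $(E_{\textbf{w}}^x, d_x E_{c_f}^{n+1})_x+(E_{\textbf{w}}^y, d_y E_{c_f}^{n+1})_y$, and a reaction/source error plus a backward-Euler truncation remainder.

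For the flux error, subtracting \eqref{e_model_fully_discrete_auxiliary W} from $\textbf{w}=\textbf{u}c_f-\phi\textbf{D}\nabla c_f$ and adding/subtracting interpolants yields the decomposition
\[
E_{\textbf{w}}^x = -\,\Pi_h \Psi^{n+1} D_{11} d_x E_{c_f}^{n+1} + E_{\textbf{u}}^{x,n+1}\Pi_h c_f^{n+1} + U^{x,n+1}\Pi_h E_{c_f}^{n+1} - \Pi_h E_\phi^{n+1} D_{11} d_x c_f^{n+1} + r_x^{n+1},
\]
where $r_x^{n+1}$ collects the interpolation and difference-quotient consistency remainders $(\Pi_h-I)c_f^{n+1}$, $(\Pi_h-I)\phi^{n+1}$ and $d_x c_f^{n+1}-\partial_x c_f^{n+1}$, and is $O(h^2+k^2)$ in $\ell^\infty$ by the second-order precision of $\Pi_h$ and the smoothness of the exact solution. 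Pairing against $d_x E_{c_f}^{n+1}$ and invoking $\Pi_h\Psi^{n+1}\ge\phi_{0*}$ from Lemma \ref{le_porosity_Err1} produces the coercive contribution $\phi_{0*} D_{11}\|d_x E_{c_f}^{n+1}\|_x^2$ (and an analogous $y$-term), while the remaining pieces are controlled by Cauchy--Schwarz and Young's inequality, using the uniform boundedness of $\textbf{u}$, $c_f$, $\nabla c_f$ and an a priori $L^\infty$ bound on $U^{n+1}$, with $\varepsilon$-coefficients chosen small enough to be absorbed back into the coercive piece on the left.

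The time-derivative pairing is handled by writing $\Psi C_f-\phi c_f=\Psi E_{c_f}+E_\phi c_f$ and applying the discrete product identity $d_t(\Psi^{n+1}E_{c_f}^{n+1})=\Psi^{n+1}d_t E_{c_f}^{n+1}+(d_t\Psi^{n+1})E_{c_f}^{n}$ together with $2(a-b)a=a^2-b^2+(a-b)^2$, so that $(\Psi^{n+1}d_t E_{c_f}^{n+1}, E_{c_f}^{n+1})_M$ telescopes to a $(\Psi^n,(E_{c_f}^n)^2)_M$-difference plus a nonnegative $\tfrac{\Delta t}{2}(\Psi^{n+1},(d_t E_{c_f}^{n+1})^2)_M$ term that is discarded; the correction $\Delta t(d_t\Psi^{n+1},(E_{c_f}^n)^2)_M$ is absorbed via the uniform bound \eqref{porosity_Err3}. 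The companion contribution $d_t(E_\phi c_f)^{n+1}$ splits as $E_\phi^{n+1}d_t c_f^{n+1}+c_f^n d_t E_\phi^{n+1}$, producing the $\|E_\phi^{n+1}\|_M^2$ and $\|d_t E_\phi^{n+1}\|_M^2$ contributions on the right of \eqref{concentration_Err1}. The reaction source yields terms in $\|E_\phi^{n+1}\|_M^2, \|E_{c_f}^{n+1}\|_M^2$ and $\|E_T^n\|_M^2$, the last arising from $k_s(Z^n)-k_s(T^{n+1})$ which I split as $[k_s(Z^n)-k_s(T^n)]+[k_s(T^n)-k_s(T^{n+1})]$ and estimate by Lipschitz continuity together with a $\Delta t$ consistency, while the backward-Euler truncation $d_t(\phi c_f)^{n+1}-\partial_t(\phi c_f)^{n+1}$ and the discrete convection truncation both contribute $O((\Delta t)^2)$ once tested. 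Multiplying by $2\Delta t$ and summing over $n=0,\dots,m$ then produces \eqref{concentration_Err1}. The main technical obstacle is the bookkeeping in the flux decomposition: every cross term that eventually multiplies $d_x E_{c_f}^{n+1}$ or $d_y E_{c_f}^{n+1}$ must be absorbed with a coefficient strictly smaller than $\phi_{0*}\min(D_{11},D_{22})$, which forces careful use of the a priori velocity bound and of the interpolation precision of $\Pi_h$.
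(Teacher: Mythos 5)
Your proposal follows essentially the same route as the paper's proof: the same error equation obtained by subtracting \eqref{e_Darcy3_transform} from \eqref{e_model_fully_discrete_concentration}, testing with $E_{c_f}^{n+1}$, summation by parts via Lemma \ref{le1}, the identical decomposition of $E_{\textbf{w}}$ into a coercive $\Pi_h\Psi^{n+1}$-weighted term plus cross terms controlled by the a priori bound on $\textbf{U}$, the discrete product rule for $d_t(\Psi E_{c_f}+c_f E_\phi)$, and the Lipschitz/consistency splitting of $k_s(Z^n)-k_s(T^{n+1})$. The only differences are cosmetic (which factor carries the superscript $n$ versus $n+1$ in the product rule, and where the $O(h^4+k^4)$ spatial consistency terms are filed), so the argument matches the paper's.
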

 
 \begin{proof}
 Subtracting \eqref{e_Darcy3_transform} from \eqref{e_model_fully_discrete_concentration}, we have that
 \begin{equation} \label{concentration_Err2}
\aligned
 d_t (\Psi C_f - \phi c_f ) ^{n+1}_{i,j} + & [D_x E_{\textbf{w}} ]^{x,n+1}_{i,j} + [D_y E_{\textbf{w}} ]^{y,n+1}_{i,j}
=   k_c a_v( \Psi^{n+1}_{i,j} ) \left( \frac{1}{ 1+ k_s(Z^n_{i,j} ) /k_c } -1 \right) E_{c_f,i,j}^{n+1} \\
&+ k_c a_v( \Psi^{n+1}_{i,j} ) \left( \frac{1}{ 1+ k_s(Z^n_{i,j} ) /k_c } -\frac{1}{ 1+ k_s( T^{n+1}_{i,j} ) /k_c } \right) c^{n+1}_{f,i,j} \\
& + k_c ( a_v( \Psi^{n+1}_{i,j} )- a_v(\phi^{n+1}_{i,j} ) ) \frac{1}{ 1+ k_s( T^{n+1}_{i,j} ) /k_c } c^{n+1}_{f,i,j}
 + f^{n+1}_{P,i,j} E_{c_f,i,j}^{n+1} \\
 & + S^{n+1}_{1,i,j} + S^{n+1}_{2,i,j}, 
\endaligned
\end{equation}
where 
$$S_{1,i,j}^{n+1} = \dfrac{\partial (\phi c_f )_{i,j}^{n+1} }{\partial t}  - d_t (\phi c_f )_{i,j}^{n+1}, $$ 
and 
 $$S_{2,i,j}^{n+1} = [D_x w]^{x,n+1}_{i,j} + [D_y w]^{y,n+1}_{i,j} - ( \frac{ \partial w^{x,n+1}_{i,j} }{ \partial x} +  \frac{ \partial w^{y,n+1}_{i,j} }{ \partial y} ). $$
 Noting \eqref{e_model_fully_discrete_auxiliary W}, we can obtain
 \begin{equation} \label{concentration_Err3}
\aligned
 E_{\textbf{w},i+1/2,j }^{x,n+1} = & U^{x,n+1}_{i+1/2,j} \Pi_h C^{n+1}_{f,i+1/2,j} - u^{x,n+1}_{i+1/2,j} c_{f,i+1/2,j}^{n+1} \\
&
-D_{11} (  \Pi_h \Psi^{n+1}_{i+1/2,j}  [d_x C_f ]^{n+1}_{i+1/2,j} - \phi^{n+1}_{i+1/2,j} \frac{ \partial c_{f,i+1/2,j}^{n+1} }{ \partial x} ),  
\endaligned
\end{equation} 
and 
 \begin{equation} \label{concentration_Err4}
\aligned
 E_{\textbf{w},i,j+1/2 }^{y,n+1} = & U^{y,n+1}_{i,j+1/2} \Pi_h C^{n+1}_{f,i,j+1/2} - u^{y,n+1}_{i,j+1/2} c_{f,i,j+1/2}^{n+1} \\
& -D_{22} (  \Pi_h \Psi^{n+1}_{i,j+1/2}  [d_y C_f ]^{n+1}_{i,j+1/2} - \phi^{n+1}_{i,j+1/2} \frac{ \partial c^{n+1}_{f,i,j+1/2} }{ \partial y } ) . 
\endaligned
\end{equation} 
Multiplying \eqref{concentration_Err2} by $E_{c_f,i,j}^{n+1} h k $ and making summation on $i,j$ for $1 \leq i \leq N_x, \ 1 \leq j \leq N_y $ lead to  
 \begin{equation} \label{concentration_Err5}
\aligned
& \left( d_t (\Psi C_f - \phi c_f ) ^{n+1}, E_{c_f}^{n+1} \right)_M+ ( D_x E_{\textbf{w}}^{x,n+1} ,E_{c_f}^{n+1} )_M + (D_y E_{\textbf{w}}^{y,n+1}, E_{c_f}^{n+1} )_M 
\\
= & \left( k_c a_v( \Psi^{n+1} ) \left( \frac{1}{ 1+ k_s(Z^n) /k_c } -1 \right) E_{c_f}^{n+1}, 
E_{c_f}^{n+1} \right)_M \\
& + \left( k_c a_v( \Psi^{n+1} ) \left( \frac{1}{ 1+ k_s(Z^n) /k_c } -\frac{1}{ 1+ k_s( T^{n+1} ) /k_c } \right) c_f^{n+1},  E_{c_f}^{n+1} \right)_M \\
& + \left( k_c ( a_v( \Psi^{n+1} )- a_v(\phi^{n+1} ) ) \frac{1}{ 1+ k_s( T^{n+1} ) /k_c } c_f^{n+1},  E_{c_f}^{n+1} \right)_M \\
&+ ( f_P^{n+1} E_{c_f}^{n+1}, E_{c_f}^{n+1}  )_M + (S_1^{n+1}, E_{c_f}^{n+1}  )_M +
(S_2^{n+1}, E_{c_f}^{n+1}  )_M .
\endaligned
\end{equation}
Recalling Lemma \ref{le_porosity_Err1}, the first term on the left side of \eqref{concentration_Err5} can be bounded by
\begin{equation} \label{concentration_Err6}
\aligned
& \left( d_t (\Psi C_f - \phi c_f ) ^{n+1}, E_{c_f}^{n+1} \right)_M =
\left( d_t (\Psi E_{c_f}+ c_f E_{\phi} )^{n+1},   E_{c_f}^{n+1} \right)_M \\
= & ( d_t \Psi^{n+1} E_{c_f}^{n+1}, E_{c_f}^{n+1} )_M + ( \Psi^{n} d_t E_{c_f}^{n+1} , E_{c_f}^{n+1} )_M \\
& + ( d_t c_f^{n+1}E_{\phi}^{n+1}, E_{c_f}^{n+1} )_M + (c_f^{n} d_t E_{\phi}^{n+1} , E_{c_f}^{n+1} )_M \\
\geq &\frac{\phi_{0*}}{2} \frac{ \| E_{c_f}^{n+1} \|^2 - \| E_{c_f}^{n} \|^2 }{\Delta t} + 
\frac{\phi_{0*}}{2} \frac{ \| E_{c_f}^{n+1}-E_{c_f}^{n} \|^2 }{\Delta t} \\
& + ( d_t c_f^{n+1}E_{\phi}^{n+1}, E_{c_f}^{n+1} )_M  + (c_f^{n} d_t E_{\phi}^{n+1} , E_{c_f}^{n+1} )_M.
\endaligned
\end{equation}
Taking notice of \eqref{concentration_Err3} and Lemma \ref{le1}, the second term on the left side of \eqref{concentration_Err5} can be transformed into
\begin{equation} \label{concentration_Err7}
\aligned
&  ( D_x E_{\textbf{w}}^{x,n+1} ,E_{c_f}^{n+1} )_M = 
-(E_{\textbf{w}}^{x,n+1}, d_x E_{c_f}^{n+1} )_x \\
= & D_{11}  (  \Pi_h \Psi^{n+1}  [d_x C_f ]^{n+1} - \phi^{n+1} \frac{ \partial c_{f}^{n+1} }{ \partial x} , d_x E_{c_f}^{n+1} )_x  - ( U^{x,n+1} \Pi_h C^{n+1}_f - u^{x,n+1} c_{f}^{n+1}, d_x E_{c_f}^{n+1} )_x \\
= & D_{11} (  \Pi_h \Psi^{n+1} d_x E_{c_f}^{n+1} , d_x E_{c_f}^{n+1} )_x + 
D_{11} ( d_x c_{f}^{n+1} \Pi_h E_{\phi}^{n+1}, d_x E_{c_f}^{n+1} )_x \\
&+D_{11} ( \Pi_h \phi^{n+1} d_x c_{f}^{n+1} - \phi^{n+1} \frac{ \partial c_{f}^{n+1} }{ \partial x}, d_x E_{c_f}^{n+1} )_x \\
 & - ( U^{x,n+1} \Pi_h E_{c_f}^{n+1} , d_x E_{c_f}^{n+1} )_x - ( E_{\textbf{u}}^{x,n+1} \Pi_h c_f^{n+1}, d_x E_{c_f}^{n+1} )_x  \\
 &- \left( u^{x,n+1} (\Pi_h c_f^{n+1}-c_f^{n+1}), d_x E_{c_f}^{n+1} \right)_x ,
\endaligned
\end{equation}
where we shall first assume that there exists a positive constant $C^*$ such that 
\begin{equation} \label{concentration_boundedness_U}
\aligned
& \| \textbf{U}^{n+1} \|_{\infty} \leq C^*,
\endaligned
\end{equation}
and the proof of \eqref{concentration_boundedness_U} is essentially identical with the estimates in \cite{li2018block} by using an induction process. So we omit the details for simplicity.

Taking notice of \eqref{concentration_Err4} and Lemma \ref{le1}, the third term on the left side of \eqref{concentration_Err5} can be transformed into
\begin{equation} \label{concentration_Err8}
\aligned
 ( D_y E_{\textbf{w}}^{y,n+1} ,E_{c_f}^{n+1} )_M 
= & D_{22} (  \Pi_h \Psi^{n+1} d_y E_{c_f}^{n+1} , d_y E_{c_f}^{n+1} )_y + 
D_{22} ( d_y c_{f}^{n+1} \Pi_h E_{\phi}^{n+1}, d_y E_{c_f}^{n+1} )_y \\
&+D_{22} ( \Pi_h \phi^{n+1} d_y c_{f}^{n+1} - \phi^{n+1} \frac{ \partial c_{f}^{n+1} }{ \partial y}, d_y E_{c_f}^{n+1} )_y \\
 & - ( U^{y,n+1} \Pi_h E_{c_f}^{n+1} , d_y E_{c_f}^{n+1} )_y - ( E_{\textbf{u}}^{y,n+1} \Pi_h c_f^{n+1}, d_y E_{c_f}^{n+1} )_y  \\
 &- \left( u^{y,n+1} (\Pi_h c_f^{n+1}-c_f^{n+1}), d_y E_{c_f}^{n+1} \right)_y. 
\endaligned
\end{equation}
Recalling that $ a_v(\phi ) = \frac{1-\phi}{1-\phi_0} a_0 $, we have 
\begin{equation} \label{concentration_interface_a}
\aligned
 0 \leq   a_v(\phi^{n+1} ) \leq a_0.
\endaligned
\end{equation}
Thus the first term on the right side of \eqref{concentration_Err5} can be estimated by
 \begin{equation} \label{concentration_Err9}
\aligned
& \left( k_c a_v( \Psi^{n+1} ) \left( \frac{1}{ 1+ k_s(Z^n) /k_c } -1 \right) E_{c_f}^{n+1}, 
E_{c_f}^{n+1} \right)_M \leq C \| E_{c_f}^{n+1} \|_M^2.
\endaligned
\end{equation}
The second term on the right side of \eqref{concentration_Err5} can be bounded by
 \begin{equation} \label{concentration_Err10}
\aligned
&\left( k_c a_v( \Psi^{n+1} ) \left( \frac{1}{ 1+ k_s(Z^n) /k_c } -\frac{1}{ 1+ k_s( T^{n+1} ) /k_c } \right) c_f^{n+1},  E_{c_f}^{n+1} \right)_M \\
\leq & C \|T^{n+1}-Z^n \|^2_M + C \| E_{c_f}^{n+1} \|_M^2 \\
\leq & C\| E_{T}^n \|_M^2 +  C \| E_{c_f}^{n+1} \|_M^2  + C (\Delta t)^2.
\endaligned
\end{equation}
Using Cauchy-Schwartz inequality, the third term on the right side of \eqref{concentration_Err5} can be bounded by
 \begin{equation} \label{concentration_Err11}
\aligned
& \left( k_c ( a_v( \Psi^{n+1} )- a_v(\phi^{n+1} ) ) \frac{1}{ 1+ k_s( T^{n+1} ) /k_c } c_f^{n+1},  E_{c_f}^{n+1} \right)_M \\
\leq & C \| E_{\phi}^{n+1} \|_M^2 + C \| E_{c_f}^{n+1} \|_M^2 . 
\endaligned
\end{equation}
Combining \eqref{concentration_Err5} with \eqref{concentration_Err6}-\eqref{concentration_Err11} and using the Cauchy-Schwartz inequality leads to
 \begin{equation} \label{concentration_Err12}
\aligned
& \frac{\phi_{0*}}{2} \frac{ \| E_{c_f}^{n+1} \|^2_M - \| E_{c_f}^{n} \|^2_M }{\Delta t} + 
\frac{\phi_{0*}}{2} \frac{ \| E_{c_f}^{n+1}-E_{c_f}^{n} \|^2_M  }{\Delta t}  + D_{11}\phi_{0*} \| d_x E_{c_f}^{n+1} \|_x^2 + D_{22}\phi_{0*} \| d_y E_{c_f}^{n+1} \|_y^2 \\
\leq & C \| E_{\phi}^{n+1} \|_M^2 + C \| E_{c_f}^{n+1} \|_M^2 + C \| d_t E_{\phi}^{n+1} \|_M^2  + \frac{ D_{11}}{2}\phi_{0*} \| d_x E_{c_f}^{n+1} \|_x^2 \\
& + \frac{D_{22} }{2} \phi_{0*} \| d_y E_{c_f}^{n+1} \|_y^2 + C \| E_{\textbf{u}}^{n+1}\|_{TM}^2 + C\| E_{T}^n \|_M^2 + C (h^4+k^4) + C (\Delta t)^2,
\endaligned
\end{equation}
which leads to the desired result \eqref{concentration_Err1}.
 \end{proof}
 \medskip
  \begin{lemma}\label{le_error_temperature}
The approximate error of discrete temperature satisfy
\begin{equation} \label{temperature_Err1}
\aligned
&   \| E_{T}^{m+1} \|^2_M + \Delta t \sum\limits_{n=0}^m ( \| d_x E_{T}^{n+1} \|_x^2 +  \| d_y E_{T}^{n+1} \|_y^2 ) \\
\leq &  C \Delta t \sum\limits_{n=0}^m \| E_{T}^{n+1} \|_M^2 + C \Delta t \sum\limits_{n=0}^m \| E_{\phi}^{n+1} \|_M^2  + C \Delta t \sum\limits_{n=0}^m \| d_t E_{\phi}^{n+1} \|_M^2   \\
&+ C \Delta t \sum\limits_{n=0}^m \| E_{\textbf{u}}^{n+1}\|_{TM}^2  + C \Delta t \sum\limits_{n=0}^m \| E_{c_f}^{n+1} \|_M^2 + C (h^4+k^4 + C (\Delta t)^2), \ \ m\leq N-1,
\endaligned
\end{equation}
where the positive constant $C$ is independent of $h,k$ and $\Delta t$.
\end{lemma}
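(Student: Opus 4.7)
The plan is to follow the template of Lemma \ref{le_error_concentration} closely, since \eqref{e_Heat1_transform} and \eqref{e_model_fully_discrete_Temperature} have the same structural form as \eqref{e_Darcy3_transform} and \eqref{e_model_fully_discrete_concentration}. First I would subtract \eqref{e_Heat1_transform} from \eqref{e_model_fully_discrete_Temperature} to produce an error identity whose left-hand side contains the discrete time derivative $d_t[A(\Psi)Z - A(\phi)T]^{n+1}$, with $A(\phi)=\rho_s(1-\phi)\theta_s + \rho_f \phi \theta_f$, together with $[D_x E_{\textbf{v}}^{x,n+1}]_{i,j}+[D_y E_{\textbf{v}}^{y,n+1}]_{i,j}$, and whose right-hand side contains the reaction-heat difference $a_v(\Psi^{n+1})H_r(Z^n)R(\overline{C}_f^{n+1},Z^n) - a_v(\phi^{n+1})H_r(T^{n+1})R(c_f^{n+1},T^{n+1})$ together with truncation errors $S_1^{n+1}$ (from $d_t-\partial_t$) and $S_2^{n+1}$ (from the divergence discretization).

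I would then test against $E_{T,i,j}^{n+1} h k$ and sum over $i,j$. The identity $A(\Psi)Z - A(\phi)T = A(\Psi)E_T + (\rho_f\theta_f - \rho_s\theta_s)\,T\,E_\phi$ splits the discrete time-derivative term, exactly as in \eqref{concentration_Err6}, into $(d_t A(\Psi)^{n+1} E_T^{n+1}, E_T^{n+1})_M + (A(\Psi^n)\,d_t E_T^{n+1}, E_T^{n+1})_M + (\rho_f\theta_f-\rho_s\theta_s)\bigl[(d_t T^{n+1} E_\phi^{n+1}, E_T^{n+1})_M + (T^n d_t E_\phi^{n+1}, E_T^{n+1})_M\bigr]$. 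The middle piece supplies the standard discrete energy structure $\tfrac{c_0}{2}(\|E_T^{n+1}\|_M^2 - \|E_T^n\|_M^2)/\Delta t + \tfrac{c_0}{2}\|E_T^{n+1}-E_T^n\|_M^2/\Delta t$ with $c_0=\min(\rho_s\theta_s,\rho_f\theta_f)>0$ uniform in $n$ by Lemma \ref{le_porosity_Err1}, while the remaining three pieces are absorbed into the right-hand side by Cauchy--Schwarz, producing contributions in $\|E_T^{n+1}\|_M^2$, $\|E_\phi^{n+1}\|_M^2$ and $\|d_t E_\phi^{n+1}\|_M^2$.

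For the divergence term I would apply Lemma \ref{le1} to rewrite the sum as $-(E_{\textbf{v}}^{x,n+1}, d_x E_T^{n+1})_x - (E_{\textbf{v}}^{y,n+1}, d_y E_T^{n+1})_y$, and substitute \eqref{e_model_fully_discrete_auxiliary V}. The conduction part splits exactly as in \eqref{concentration_Err7}--\eqref{concentration_Err8}: a coercive piece $\lambda(\Pi_h\Psi^{n+1})(d_x E_T^{n+1}, d_x E_T^{n+1})_x$ (bounded below by $\min(\lambda_s,\lambda_f)>0$ via Lemma \ref{le_porosity_Err1}), a coupling piece with $d_x T^{n+1}\,\Pi_h E_\phi^{n+1}$ controlled in $\|E_\phi^{n+1}\|_M^2$, and an $O(h^2+k^2)$ interpolation remainder. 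The convective piece $\rho_f\theta_f(U\,\Pi_h Z - u\,T)$ I would decompose as $\rho_f\theta_f[\,U\,\Pi_h E_T + E_{\textbf{u}}\,\Pi_h T + u\,(\Pi_h T - T)\,]$, and, using the a priori bound $\|\textbf{U}^{n+1}\|_\infty \leq C^*$ carried over from \eqref{concentration_boundedness_U}, control each piece by Cauchy--Schwarz and Young's inequality, absorbing the $d_x E_T$ factor into half of the coercive term. The reaction-heat right-hand side would be split telescopically so as to isolate $E_\phi^{n+1}$ (from $a_v(\Psi^{n+1})-a_v(\phi^{n+1})$), $E_T^n$ together with $(\Delta t)^2$ (from Lipschitz continuity of $H_r$ and $k_s$ applied to the $Z^n\leftrightarrow T^{n+1}$ gap), and $E_{c_f}^{n+1}$ (from $\overline{C}_f^{n+1}\leftrightarrow c_f^{n+1}$, using $|\overline{C}_f^{n+1}-c_f^{n+1}|\leq |E_{c_f}^{n+1}|$). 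Multiplying by $2\Delta t$, summing from $n=0$ to $m$, and using $E_T^0=0$ yields \eqref{temperature_Err1}.

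The main obstacle I expect is the time-derivative decomposition driven by the nonlinear $\phi$-dependent coefficient $A(\phi)$: unlike the concentration case, where the leading coefficient of $c_f$ is simply $\phi$, here both rock and fluid heat-capacity terms must be tracked simultaneously, and the uniform positivity of $A(\Psi^n)$ (from the two-sided bounds in Lemma \ref{le_porosity_Err1}) is essential to extract a clean discrete energy estimate. The coupling of $E_\phi$ with $d_t T^{n+1}$ and of $T^n$ with $d_t E_\phi^{n+1}$ is precisely the source of the $\|d_t E_\phi^{n+1}\|_M^2$ term on the right-hand side of \eqref{temperature_Err1}, which is why the prior bound \eqref{porosity_Err15} on $\Delta t \sum \|d_t E_\phi^{n+1}\|_M^2$ must be invoked in the eventual combination of the lemmas.
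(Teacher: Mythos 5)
Your proposal is correct and follows essentially the same route as the paper: the same error identity, the same splitting $A(\Psi)Z - A(\phi)T = A(\Psi)E_T + (\rho_f\theta_f-\rho_s\theta_s)TE_\phi$ with uniform positivity of $A(\Psi^n)$ and of $\lambda(\Pi_h\Psi^{n+1})$ from Lemma \ref{le_porosity_Err1}, the same use of Lemma \ref{le1} and the decomposition of $E_{\textbf{v}}$, and the same telescoping of the reaction-heat term into $H_r$, $R$, and $a_v$ differences. The only differences are cosmetic (the paper labels the truncation terms $S_3,S_4$ and writes the discrete product rule with the roles of the $n$ and $n+1$ factors swapped), so no further comment is needed.
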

 
 \begin{proof}
 Subtracting \eqref{e_Heat1_transform} from \eqref{e_model_fully_discrete_Temperature}, we have that
 \begin{equation} \label{temperature_Err2}
\aligned
& \rho_s \theta_{s} d_t \left( (1-\phi)E_T+( 1-E_{\phi} ) T \right)_{i,j}^{n+1} +
\rho_f \theta_{f} d_t ( \phi E_T +E_{\phi} T )_{i,j}^{n+1} \\
& +  [D_x E_{\textbf{v}} ]^{x,n+1}_{i,j} + [D_y E_{\textbf{v}} ]^{y,n+1}_{i,j} \\
= & a_v( \Psi^{n+1}_{i,j} ) H_r(Z^n_{i,j}) R(C^{n+1}_{f,i,j}, Z^n_{i,j} ) - 
a_v( \phi^{n+1}_{i,j} ) H_{r}(T^{n+1}_{i,j} )R(c_{f,i,j}^{n+1}, T^{n+1}_{i,j} ) \\
& + S^{n+1}_{3,i,j} + S^{n+1}_{4,i,j}, 
\endaligned
\end{equation} 
where 
$$S_{3,i,j}^{n+1} = \frac{ \partial [ \left( \rho_s(1-\phi) \theta_{s} + \rho_f  \phi \theta_{f} \right) T ] }{ \partial t } |_{i,j}^{n+1}  - d_t [ \left( \rho_s(1-\phi) \theta_{s} + \rho_f  \phi \theta_{f} \right) T ]_{i,j}^{n+1}, $$ 
and 
 $$S_{4,i,j}^{n+1} = [D_x v]^{x,n+1}_{i,j} + [D_y v]^{y,n+1}_{i,j} - ( \frac{ \partial v^{x,n+1}_{i,j} }{ \partial x} +  \frac{ \partial v^{y,n+1}_{i,j} }{ \partial y} ). $$
 Noting \eqref{e_model_fully_discrete_auxiliary V}, we can obtain
 \begin{equation} \label{temperature_Err3}
\aligned
 E_{\textbf{v},i+1/2,j }^{x,n+1} = & \rho_f \theta_{f} U^{x,n+1}_{i+1/2,j} \Pi_h Z^{n+1}_{i+1/2,j} -  \rho_f \theta_{f} u^{x,n+1}_{i+1/2,j}  T^{n+1}_{i+1/2,j} \\
 & - \left( \lambda ( \Pi_h \Psi^{n+1}_{i+1/2,j} ) [d_x Z]^{n+1}_{i+1/2,j} - 
  \lambda(\phi^{n+1}_{i+1/2,j} ) \frac{ \partial T_{i+1/2,j}^{n+1} }{ \partial x}
 \right),  
\endaligned
\end{equation} 
and 
 \begin{equation} \label{temperature_Err4}
\aligned
 E_{\textbf{v},i,j+1/2 }^{y,n+1} = & \rho_f \theta_{f} U^{y,n+1}_{i,j+1/2} \Pi_h Z^{n+1}_{i,j+1/2} -  \rho_f \theta_{f} u^{y,n+1}_{i,j+1/2}  T^{n+1}_{i,j+1/2} \\
 & - \left( \lambda ( \Pi_h \Psi^{n+1}_{i,j+1/2 } ) [d_y Z]^{n+1}_{i,j+1/2 } - 
  \lambda(\phi^{n+1}_{i,j+1/2 } ) \frac{ \partial T_{i,j+1/2 }^{n+1} }{ \partial y}
 \right).  
\endaligned
\end{equation} 
Multiplying \eqref{temperature_Err2} by $E_{T,i,j}^{n+1} h k $ and making summation on $i,j$ for $1 \leq i \leq N_x, \ 1 \leq j \leq N_y $ lead to  
\begin{equation} \label{temperature_Err5}
\aligned
& \rho_s \theta_{s} \left( d_t \left( (1-\Psi)E_T - E_{\phi}  T \right)^{n+1} , E_{T}^{n+1} \right)_M + \rho_f \theta_{f} \left( d_t ( \Psi E_T +E_{\phi} T )^{n+1}, E_{T}^{n+1} 
\right)_M \\
&+ ( D_x E_{\textbf{v}}^{x,n+1} ,E_{T}^{n+1} )_M + (D_y E_{\textbf{v}}^{y,n+1}, E_{T}^{n+1} )_M \\
= & \left( a_v( \Psi^{n+1} ) H_r(Z^n ) R( \overline{C}^{n+1}_f, Z^n ) - 
a_v( \phi^{n+1} ) H_{r}(T^{n+1} )R(c_{f}^{n+1}, T^{n+1} ), E_{T}^{n+1} 
\right)_M \\
& + ( S^{n+1}_{3} + S^{n+1}_{4}, E_{T}^{n+1} )_M,
\endaligned
\end{equation} 
The first two terms on the left side of \eqref{temperature_Err5} can be transformed into 
\begin{equation} \label{temperature_Err6}
\aligned
& \rho_s \theta_{s} \left( d_t \left( (1-\Psi)E_T -E_{\phi}  T \right)^{n+1} , E_{T}^{n+1} \right)_M + \rho_f \theta_{f} \left( d_t ( \Psi E_T +E_{\phi} T )^{n+1}, E_{T}^{n+1} 
\right)_M \\
= & \rho_s \theta_{s} \left( E_T^{n+1} d_t (1-\Psi^{n+1} ) + (1-\Psi^{n} ) d_t E_{T}^{n+1} , E_{T}^{n+1} 
\right)_M \\
& - \rho_s \theta_{s} \left( T^{n+1} d_t E_{\phi}^{n+1}  + E_{\phi}^{n}  d_t T^{n+1} , E_{T}^{n+1} \right)_M \\
& +  \rho_f \theta_{f} \left( d_t \Psi^{n+1} E_T^{n+1} + \Psi^n d_t E_T^{n+1} + T^{n+1} d_t E_{\phi}^{n+1} + E_{\phi}^{n} d_t T^{n+1}  , E_{T}^{n+1} \right)_M \\
=& \big( \left(  \rho_s \theta_{s} ( 1- \Psi^n) +  \rho_f \theta_{f} \Psi^n \right) d_tE_T^{n+1}  , E_{T}^{n+1} \big)_M \\
& + \big( \left(  \rho_s \theta_{s} d_t ( 1- \Psi^{n+1} ) +  \rho_f \theta_{f} d_t \Psi^{n+1} \right) E_T^{n+1}  , E_{T}^{n+1} \big)_M \\
& + \left(   ( \rho_f \theta_{f} - \rho_s \theta_{s} ) ( d_t E_{\phi}^{n+1} T^{n+1} + E_{\phi}^{n} d_t T^{n+1} ) , E_{T}^{n+1} \right)_M  ,
\endaligned
\end{equation}
where we should note that $\rho_s \theta_{s} ( 1- \Psi^n_{i,j} ) +  \rho_f \theta_{f} \Psi^n_{i,j} \geq \min\{ \rho_s \theta_{s}, \rho_f \theta_{f} \}$ due to \eqref{porosity_Err2}.

Taking notice of \eqref{temperature_Err3} and Lemma \ref{le1}, the third term on the left side of \eqref{temperature_Err5} can be transformed into
\begin{equation} \label{temperature_Err7}
\aligned
& ( D_x E_{\textbf{v}}^{x,n+1} ,E_{T}^{n+1} )_M  = 
-(E_{\textbf{v}}^{x,n+1}, d_x E_{T}^{n+1} )_x \\
= &   \left(  \lambda ( \Pi_h \Psi^{n+1} )  [d_x Z]^{n+1} -  \lambda(\phi^{n+1} ) \frac{ \partial T^{n+1} }{ \partial x} , d_x E_{T}^{n+1} \right)_x  \\
& - \rho_f \theta_{f} ( U^{x,n+1} \Pi_h Z^{n+1} - u^{x,n+1} T^{n+1}, d_x E_T^{n+1} )_x \\
=  &  \left(  \lambda ( \Pi_h \Psi^{n+1} ) d_x E_{T}^{n+1} , d_x E_{T}^{n+1} \right)_x + 
 \left( d_x T^{n+1} \lambda ( \Pi_h E_{\phi}^{n+1} ), d_x E_{T}^{n+1} \right)_x \\
&+\left(  \lambda (  \Pi_h \phi^{n+1} ) d_x T^{n+1} -  \lambda ( \phi^{n+1} ) \frac{ \partial T^{n+1} }{ \partial x}, d_x E_{T}^{n+1} \right)_x \\
 & - \rho_f \theta_{f} ( U^{x,n+1} \Pi_h E_{T}^{n+1} , d_x E_{T}^{n+1} )_x - \rho_f \theta_{f}  ( E_{\textbf{u}}^{x,n+1} \Pi_h T^{n+1}, d_x E_{T}^{n+1} )_x  \\
 &- \rho_f \theta_{f}  \left( u^{x,n+1} (\Pi_h T^{n+1}-T^{n+1}), d_x E_{T}^{n+1} \right)_x ,
\endaligned
\end{equation}  
where we should note that $ \lambda ( \Pi_h \Psi^{n+1} ) \geq \min\{ \lambda_{s}, \lambda_f \}$ due to \eqref{porosity_Err2}.

Taking notice of \eqref{temperature_Err4} and Lemma \ref{le1}, the last term on the left side of \eqref{temperature_Err5} can be transformed into
\begin{equation} \label{temperature_Err8}
\aligned
& ( D_y E_{\textbf{v}}^{y,n+1} ,E_{T}^{n+1} )_M  = 
-(E_{\textbf{v}}^{y,n+1}, d_y E_{T}^{n+1} )_y \\
= &   \left(  \lambda ( \Pi_h \Psi^{n+1} )  [d_y Z]^{n+1} -  \lambda(\phi^{n+1} ) \frac{ \partial T^{n+1} }{ \partial y} , d_y E_{T}^{n+1} \right)_y  \\
& - \rho_f \theta_{f} ( U^{y,n+1} \Pi_h Z^{n+1} - u^{y,n+1} T^{n+1}, d_y E_T^{n+1} )_y \\
=  &  \left(  \lambda ( \Pi_h \Psi^{n+1} ) d_y E_{T}^{n+1} , d_y E_{T}^{n+1} \right)_y + 
 \left( d_y T^{n+1} \lambda ( \Pi_h E_{\phi}^{n+1} ), d_y E_{T}^{n+1} \right)_y \\
&+\left(  \lambda (  \Pi_h \phi^{n+1} ) d_y T^{n+1} -  \lambda ( \phi^{n+1} ) \frac{ \partial T^{n+1} }{ \partial y}, d_y E_{T}^{n+1} \right)_y \\
 & - \rho_f \theta_{f} ( U^{y,n+1} \Pi_h E_{T}^{n+1} , d_y E_{T}^{n+1} )_y - \rho_f \theta_{f}  ( E_{\textbf{u}}^{y,n+1} \Pi_h T^{n+1}, d_y E_{T}^{n+1} )_y  \\
 &- \rho_f \theta_{f}  \left( u^{y,n+1} (\Pi_h T^{n+1}-T^{n+1}), d_y E_{T}^{n+1} \right)_y .
\endaligned
\end{equation} 
Recalling \eqref{e_Darcy5} and \eqref{e_Darcy6}, we have 
 \begin{equation} \label{temperature_R}
\aligned
& R(c_{f}, T ) = k_c \left( 1- \frac{1}{ 1+ k_s(T) /k_c } \right) c_f,
\endaligned
\end{equation}  
Thus taking notice of \eqref{concentration_interface_a} and using Cauchy-Schwartz inequality, the first term on the right side of \eqref{temperature_Err5} can be estimated by
 \begin{equation} \label{temperature_Err9}
\aligned
& \left( a_v( \Psi^{n+1} ) H_r(Z^n ) R( \overline{C}^{n+1}_f, Z^n ) - 
a_v( \phi^{n+1} ) H_{r}(T^{n+1} )R(c_{f}^{n+1}, T^{n+1} ), E_{T}^{n+1} 
\right)_M  \\ 
= & \big( a_v( \Psi^{n+1} )  R( \overline{C}^{n+1}_f, Z^n ) \left( H_r(Z^n) - H_r(T^{n+1} ) \right), E_{T}^{n+1} \big)_M \\
& + \Big( a_v( \Psi^{n+1} )  H_r(T^{n+1} ) \big( R( \overline{C}^{n+1}_f, Z^n )- R(c_{f}^{n+1}, T^{n+1}  ) \big), E_{T}^{n+1} \Big)_M \\
& + \big( R(c_{f}^{n+1}, T^{n+1} ) H_r(T^{n+1} )  \left( a_v( \Psi^{n+1} )  -  a_v( \phi^{n+1} )  \right), E_{T}^{n+1} \big)_M \\
\leq &  C \| E_T^{n+1} \|^2_M + C \| E_T^{n} \|^2_M  + C \| E_{c_f}^{n+1} \|_M^2 
 + C \| E_{\phi}^{n+1} \|_M^2 
+ C(\Delta t)^2 .
\endaligned
\end{equation}  
Combining \eqref{temperature_Err5} with \eqref{temperature_Err6}-\eqref{temperature_Err9} and using the Cauchy-Schwartz inequality leads to
 \begin{equation} \label{temperature_Err10}
\aligned
&  \frac{ \| E_{T}^{n+1} \|^2_M - \| E_{T}^{n} \|^2_M }{\Delta t} + 
 \frac{ \| E_{T}^{n+1}-E_{T}^{n} \|^2_M }{\Delta t}  +  \| d_x E_{T}^{n+1} \|_x^2 +  \| d_y E_{T}^{n+1} \|_y^2 \\
\leq &  C \| E_{T}^{n+1} \|_M^2 + C \| E_{\phi}^{n+1} \|_M^2  + C \| d_t E_{\phi}^{n+1} \|_M^2  + C \| E_{\textbf{u}}^{n+1}\|_{TM}^2  \\
& + C\| E_{T}^n \|_M^2 + C \| E_{c_f}^{n+1} \|_M^2 + C (h^4+k^4) + C (\Delta t)^2,
\endaligned
\end{equation}
which leads to the desired result \eqref{temperature_Err1}. 
  \end{proof}
  
We are now in position to derive our main results. 
 
  \medskip
\begin{theorem}\label{thm_main}
 Suppose the analytical solutions are sufficiently smooth, then for the fully-discrete scheme \eqref{e_model_fully_discrete_Darcy}-\eqref{e_model_fully_discrete_porosity}, we have
\begin{equation} \label{main_result}
\aligned
& \|E_{\phi}^{m+1} \|^2_{M} + \|E_{ \textbf{u} }^{m+1}\|_{TM}^2 + \|E_p^{m+1} \|_{M}^2 + \| E_{c_f}^{m+1} \|^2_M +  \| E_{T}^{m+1} \|^2_M \\
\leq  &   C ( (\Delta t)^2+h^4+k^4), \ \ m\leq N-1,
\endaligned
\end{equation} 
where the positive constant $C$ is independent of $h,k$ and $\Delta t$.
\end{theorem}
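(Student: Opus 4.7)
The plan is to sum the estimates from Lemmas~\ref{le_error_porosity}--\ref{le_error_temperature} and close the resulting coupled inequality by a discrete Grönwall argument. After summing \eqref{porosity_Err14}, \eqref{pressure_Err1}, \eqref{concentration_Err1}, and \eqref{temperature_Err1}, the terms $\Delta t\sum_{n=0}^m\|d_tE_{\phi}^{n+1}\|_M^2$ that appear on the right-hand sides of Lemmas~\ref{le_error_pressure}, \ref{le_error_concentration}, and \ref{le_error_temperature} are eliminated by plugging in \eqref{porosity_Err15}. The shifted-index sums $\Delta t\sum_{n=0}^m\|E_{c_f}^n\|_M^2$ and $\Delta t\sum_{n=0}^m\|E_T^n\|_M^2$ are dominated by $\Delta t\sum_{n=0}^m\|E_{c_f}^{n+1}\|_M^2$ and $\Delta t\sum_{n=0}^m\|E_T^{n+1}\|_M^2$ respectively, modulo the vanishing initial errors guaranteed by \eqref{e_initial}.

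After these manipulations the combined inequality reads
\begin{equation*}
\aligned
& \|E_{\phi}^{m+1}\|_M^2 + \|E_{\textbf{u}}^{m+1}\|_{TM}^2 + \|E_{c_f}^{m+1}\|_M^2 + \|E_T^{m+1}\|_M^2 + \Delta t\sum_{n=0}^m\|d_tE_p^{n+1}\|_M^2 \\
& \quad \leq C\Delta t\sum_{n=0}^m\bigl(\|E_{\phi}^{n+1}\|_M^2 + \|E_{\textbf{u}}^{n+1}\|_{TM}^2 + \|E_{c_f}^{n+1}\|_M^2 + \|E_T^{n+1}\|_M^2\bigr) + C\bigl((\Delta t)^2+h^4+k^4\bigr).
\endaligned
\end{equation*}
Absorbing the $n=m$ contribution on the right into the left (valid once $\Delta t$ is small enough that, say, $C\Delta t<1/2$) and applying the standard discrete Grönwall inequality delivers the asserted bound for $\|E_{\phi}^{m+1}\|_M^2$, $\|E_{\textbf{u}}^{m+1}\|_{TM}^2$, $\|E_{c_f}^{m+1}\|_M^2$, and $\|E_T^{m+1}\|_M^2$, along with a control on $\Delta t\sum_{n=0}^m\|d_tE_p^{n+1}\|_M^2$. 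Since $E_p^0=0$ by \eqref{e_initial}, the telescoping identity $E_p^{m+1}=\Delta t\sum_{n=0}^md_tE_p^{n+1}$ combined with the Cauchy--Schwarz inequality and $m\Delta t\leq Q$ then yields
\[
\|E_p^{m+1}\|_M^2 \leq Q\,\Delta t\sum_{n=0}^m\|d_tE_p^{n+1}\|_M^2 \leq C\bigl((\Delta t)^2+h^4+k^4\bigr),
\]
completing the proof.

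The main subtlety is that Lemma~\ref{le_error_concentration} is established under the a priori bound $\|\textbf{U}^{n+1}\|_\infty\leq C^*$, so the argument above must be organised as an induction on $m$: assuming the theorem holds at step $m$, an inverse inequality (under a mild condition relating $\Delta t$ to $h,k$) upgrades the $L^2$ velocity estimate at step $m+1$ into the $L^\infty$ bound needed to invoke Lemma~\ref{le_error_concentration}, so that the Grönwall closure above propagates the estimate to step $m+1$. This inductive $L^\infty$ control, rather than the algebraic combination of the lemmas, is the crux of the argument.
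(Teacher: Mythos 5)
Your proposal follows essentially the same route as the paper: sum the four lemmas (substituting the bound on $\Delta t\sum_{n}\|d_tE_{\phi}^{n+1}\|_M^2$ and shifting indices), close with the discrete Gr\"onwall inequality for small $\Delta t$, and recover $\|E_p^{m+1}\|_M$ from the telescoping identity with Cauchy--Schwarz. Your explicit remark that the a priori bound $\|\textbf{U}^{n+1}\|_\infty\leq C^*$ forces an inductive organisation is a point the paper only gestures at (deferring to an earlier reference), but it does not change the argument.
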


\begin{proof}
Combining Lemmas \ref{le_error_porosity}-\ref{le_error_temperature} leads to
\begin{equation} \label{main_Err1}
\aligned
& \|E_{\phi}^{m+1} \|^2_{M} + \|E_{ \textbf{u} }^{m+1}\|_{TM}^2 + \Delta t \sum\limits_{n=0}^{m} \|d_t E_p^{n+1} \|_{M}^2 + \| E_{c_f}^{m+1} \|^2_M \\
&  + \Delta t \sum\limits_{n=0}^{m} ( \| d_x E_{c_f}^{n+1} \|_x^2 + \| d_y E_{c_f}^{n+1} \|_y^2 )+  \| E_{T}^{m+1} \|^2_M  \\
&  + \Delta t \sum\limits_{n=0}^m ( \| d_x E_{T}^{n+1} \|_x^2 +  \| d_y E_{T}^{n+1} \|_y^2 ) \\
\leq  &  C \Delta t \sum\limits_{n=0}^m  \|E_{c_f}^{n+1}\|_{M}^2+ C \Delta t \sum\limits_{n=0}^m  \|E_{T}^{n+1} \|_{M}^2 +  C \Delta t\sum\limits_{n=0}^m \|E_{\phi}^{n+1}\|_{M}^2 \\
&+ C \Delta t \sum\limits_{n=0}^{m} \| E_{\textbf{u}}^{n+1}\|_{TM}^2  + O( (\Delta t)^2+h^4+k^4), \ \ m\leq N-1.
\endaligned
\end{equation}
Then supposing $\Delta t$ sufficiently small and applying the discrete Gronwall inequality, we have
\begin{equation} \label{main_Err2}
\aligned
& \|E_{\phi}^{m+1} \|^2_{M} + \|E_{ \textbf{u} }^{m+1}\|_{TM}^2 + \Delta t \sum\limits_{n=0}^{m} \|d_t E_p^{n+1} \|_{M}^2 + \| E_{c_f}^{m+1} \|^2_M \\
&  + \Delta t \sum\limits_{n=0}^{m} ( \| d_x E_{c_f}^{n+1} \|_x^2 + \| d_y E_{c_f}^{n+1} \|_y^2 )+  \| E_{T}^{m+1} \|^2_M  \\
&  + \Delta t \sum\limits_{n=0}^m ( \| d_x E_{T}^{n+1} \|_x^2 
+  \| d_y E_{T}^{n+1} \|_y^2 ) \\
\leq  &   C ( (\Delta t)^2+h^4+k^4), \ \ m\leq N-1.
\endaligned
\end{equation}
Furthermore, noting
\begin{equation} \label{main_Err3}
\aligned
 E_p^{m+1}=\Delta t \sum\limits_{n=0}^{m} d_t E_p^{n+1} + E_p^{0},
\endaligned
\end{equation}
we have
\begin{equation} \label{main_Err4}
\aligned
\|E_p^{m+1} \|_{M}^2 \leq 2T\Delta t \sum\limits_{n=0}^{m} \| d_t E_p^{n+1} \|_M^2 + 2 \|E_p^{0} \|_{M}^2 \leq C ( (\Delta t)^2+h^4+k^4),
\endaligned
\end{equation}
which leads to the desired result \eqref{main_result}.

\end{proof}

\section{Numerical simulation}
In this section we provide some two- and three-dimensional numerical experiments to gauge the constructed scheme \eqref{e_model_fully_discrete_Darcy}-\eqref{e_model_fully_discrete_auxiliary V} and \eqref{e_model_fully_discrete_porosity}.
In our simulation, we set 
\begin{equation}\label{e_Darcy7}
k_s=k_{s0}  \exp\left( \frac{E_g}{R_g}(\frac{1}{T_0}- \frac{1}{T} ) \right),
\end{equation}
where $k_{s0}$ is the surface reaction rate at temperature $T_0$, $E_g$ is the activation energy, and $R_g$ is the molar gas constant \cite{kalia2010fluid,wu2021thermodynamically}. The reaction heat 
$$ H_r(T)=| -9702 +16.97 T - 0.00234 T^2 | $$ 
is generated by reaction when per unit mole of acid is consumed \cite{li2017simulation,wu2021thermodynamically}.


\subsection{Convergence rates for the wormhole model with heat transmission process in 2- and 3-D cases}

In this subsection, the domain $\Omega=(0,1)^d$ and J=[0,1], 
 $ \Delta t=h^2 $.
 We set the following parameters:
\begin{equation}
    \begin{aligned}
        &\alpha = 1;   \rho_s = 10; a_0 = 1;   k_c = 1;   
        k_{s0} = 1;
        E_g = 1;    R_g = 1; 
        \gamma = 1;   \mu =1;\\
       &D = 1E-2;
        \rho_f = 1; \theta_s = 1;    \theta_f = 10;
        \lambda_s = 10; \lambda_f = 1;
        C_I = 1,
    \end{aligned}
\end{equation}
and test the following system to verify the convergence rates
\begin{numcases}{}
\dfrac{\partial (\phi c_f)}{\partial t}+\nabla\cdot (\textbf{u}c_f)
-\nabla\cdot (\phi\textbf{D} \nabla c_f)=k_ca_v(c_s-c_f)+f_Pc_f+f_Ic_I+ g,
 \label{e_Darcy3_Convergence} \\
\frac{\partial \phi}{\partial t}=\frac{ R(c_f,T) a_v \alpha }{\rho_s} + h, \label{e_Darcy4_Convergence} \\
\frac{ \partial [ \left( \rho_s(1-\phi) \theta_{s} + \rho_f  \phi \theta_{f} \right) T ]}{ \partial t } +  \nabla \cdot (\rho_f \theta_{f} \textbf{u}  T) = \nabla \cdot ( \lambda \nabla T ) + a_v(\phi) H_r(T)R(c_f, T)+ q, \label{e_Darcy6_Convergence}
\end{numcases} 
where $g$, $h$ and $q$ are three introduced functions to satisfy the analytic solutions given in the following two examples.

\textbf{Example 1 in 2-D case}: 
Here the initial condition and the right hand side of the equation are computed according to the analytic solution given as below:
\begin{equation*}
  \left\{
   \begin{array}{l}
    p(\textbf{x},t)=t x^2(1-x)^2y^2(1-y)^2 + 1,\\
   c_f(\textbf{x},t)=1 + tcos(\pi x)cos(\pi y),\\
    T(\textbf{x},t)=\frac{1}{2}tcos(\pi x)cos(\pi y) + 10,\\
   \phi(\textbf{x},t)= \frac{1}{4}tx^2(1-x)^2sin(\pi y) + \frac{1}{4}.
   \end{array}
   \right.
  \end{equation*}
The numerical results are listed in Tables \ref{table1_example1}-\ref{table2_example1} and give solid supporting evidence for the expected second-order convergence  of the constructed scheme in 2-D case for the wormhole model, which are consistent with the error estimates in Theorem \ref{thm_main}.

\begin{table}[htbp]
\renewcommand{\arraystretch}{1.1}
\small
\centering
\caption{Errors and convergence rates for Example 1 in 2-D case}\label{table1_example1}
\begin{tabular}{ccccccccccc} 
 \hline
$h$    &$\| E_{\phi} \|_{l^{\infty}(M)}$    &Rate  &$\| E_{p }\|_{l^{\infty}(M)}$    &Rate &$\| E_{\textbf{u} }\|_{l^{\infty}(TM)}$   &Rate    \\ \hline
1/10  &  2.49E-4  & ---   &  2.92E-4  &---    &  8.53E-5  &---     \\ 
1/20  &  6.22E-5  &  2.00  &  7.26E-5  &  2.01  &  2.12E-5  &  2.01    \\ 
1/40  &  1.55E-5  &  2.00  &  1.81E-5  &  2.00  &  5.30E-6  &  2.00   \\ 
1/80  &  3.89E-6  &  2.00  &  4.54E-6  &  2.00  &  1.32E-6  &  2.00    \\ 
\hline
\end{tabular}
\end{table} 

\begin{table}[htbp]
\renewcommand{\arraystretch}{1.1}
\small
\centering
\caption{Errors and convergence rates for Example 1 in 2-D case}\label{table2_example1}
\begin{tabular}{ccccccccccc} 
 \hline
$h$      &$\| E_{c_f }\|_{l^{\infty}(M)}$   &Rate &$\| E_{T }\|_{l^{\infty}(M)}$   &Rate  \\ \hline
1/10     &  3.78E-4  &    --- &  1.89E-1
  & ---   \\ 
1/20   &  9.42E-5  &  2.01  &  4.76E-2  &  1.99  \\ 
1/40   &  2.35E-5  &  2.00  &  1.19E-2  &  2.00  \\ 
1/80    &  5.88E-6  &  2.00  &  2.98E-3  &  2.00  \\ 
\hline
\end{tabular}
\end{table} 

\textbf{Example 2 in 3-D case}: 
Here the initial condition and the right hand side of the equation are computed according to the analytic solution given as below:
\begin{equation*}
  \left\{
   \begin{array}{l}
    p(\textbf{x},t)=(e^t-1) x^4(1-x)^4cos(\pi y)cos(\pi z) + 1,\\
   c_f(\textbf{x},t)=1 + tx^3(1-x)^3cos(\pi y)cos(\pi z),\\
    T(\textbf{x},t)=\frac{1}{2}(e^t-1)cos(\pi x)cos(\pi y)cos(\pi z) + 10,\\
   \phi(\textbf{x},t)= \frac{1}{4}(e^t-1)cos(\pi x)sin(\pi y)cos(\pi z) + \frac{1}{2}.
   \end{array}
   \right.
  \end{equation*}
The numerical results are listed in Tables \ref{table1_example2}-\ref{table2_example2} and give solid supporting evidence for the expected second-order convergence  of the constructed scheme in 3-D case for the wormhole model, which are consistent with the error estimates in Theorem \ref{thm_main}.

\begin{table}[htbp]
\renewcommand{\arraystretch}{1.1}
\small
\centering
\caption{Errors and convergence rates for Example 2 in 3-D case}\label{table1_example2}
\begin{tabular}{ccccccccccc} 
 \hline
$h$    &$\| E_{\phi} \|_{l^{\infty}(M)}$    &Rate  &$\| E_{p }\|_{l^{\infty}(M)}$    &Rate &$\| E_{\textbf{u} }\|_{l^{\infty}(TM)}$   &Rate    \\ \hline
1/10  &  8.82E-4  & ---   &  4.76E-5  & ---   &  1.61E-3  & ---    \\ 
1/20  &  2.20E-4  &  2.00  &  1.17E-5  &  2.02  &  4.56E-4  &  1.82    \\ 
1/40  &  5.50E-5  &  2.00  &  2.92E-6  &  2.01  &  1.13E-4  &  2.01   \\ 
\hline
\end{tabular}
\end{table} 

\begin{table}[htbp]
\renewcommand{\arraystretch}{1.1}
\small
\centering
\caption{Errors and convergence rates for Example 2 in 3-D case}\label{table2_example2}
\begin{tabular}{ccccccccccc} 
 \hline
$h$      &$\| E_{c_f }\|_{l^{\infty}(M)}$   &Rate &$\| E_{T }\|_{l^{\infty}(M)}$   &Rate  \\ \hline
1/10    &  3.59E-4  & ---   &  6.97E-2  &  ---   \\ 
1/20    &  8.98E-5  &  2.00  &  1.74E-2  &  2.00  \\ 
1/40    &  2.25E-5  &  2.00  &  4.35E-3  &  2.00  \\ 
\hline
\end{tabular}
\end{table} 
 
 \subsection{Simulation of dissolution patterns}
In the following examples, we set $ \Omega=(0,0.2)^d.$
Here we give the more realistic physical parameter
 \begin{equation*}
    \begin{aligned}
        &\alpha = 5E-2;   \rho_s = 2.71E3; a_0 = 5.0E-1;   k_c = 1E-3; k_{s0} = 2E-3;\\
        &E_g = 5.02416E4;    R_g = 8.314; 
        \gamma = 1E0;   \mu =1.0E-3;\\
        & D = 1E-9; \rho_f = 1.01E3; \theta_s = 2.0E2;    \theta_f = 4.184E3;
        \lambda_s = 5.526; \lambda_f = 5.8E-1;C_I = 1E3.
    \end{aligned}
\end{equation*}
Initial conditions are given as below:
\begin{equation*}
    T_0 = 2.98E2;   p_0 = 1.52E5;  c_{f0}=0  .
\end{equation*}

\textbf{Example 3 with Neumann boundary condition for temperature in 2-D case}: 
In this example, we set $ J=[0,1\times10^7],\ \Delta t=1\times10^5$ s. The distributions of initial porosity and permeability in 2-D case are listed as follows:
\begin{equation*}
   \left\{
   \begin{array}{l}
\phi_0=0.5,\ \ K_0=10^{-7},\ \ \ \ (x,y)=(1.25E-3,1.0125E-1),\\
\phi_0=0.6,\ \ K_0=10^{-6},\ \ \ \ (x,y)=(1.25E-3,5.125E-2),\\
\phi_0=0.2,\ \ K_0=10^{-8},\ \ \ \ otherwise.\\
   \end{array}
   \right.
  \end{equation*}
We set following right hand side
    \begin{flalign*}
f_I=\left\{
   \begin{array}{l}
 1E-4 \ \rm{m/s},\ \  x=1.25E-3 ,\\
  0,\ \ otherwise.\\
   \end{array}
   \right.
f_P=\left\{
   \begin{array}{l}
  -1E-4 \ \rm{m/s},\ \  x=1.9875E-1,\\
  0,\ \ otherwise.\\
   \end{array}
   \right.
\end{flalign*}

 \begin{figure}[!htp]
\centering
\includegraphics[scale=0.45]{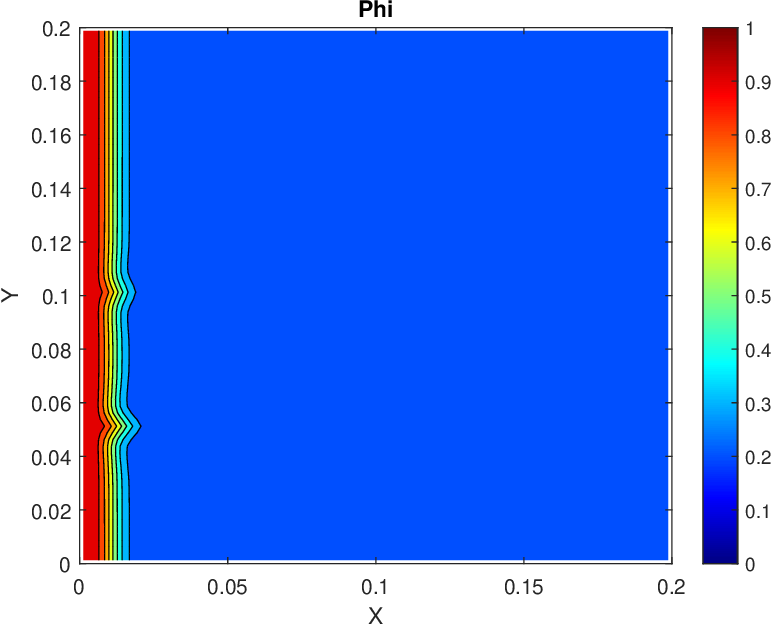}
\includegraphics[scale=0.45]{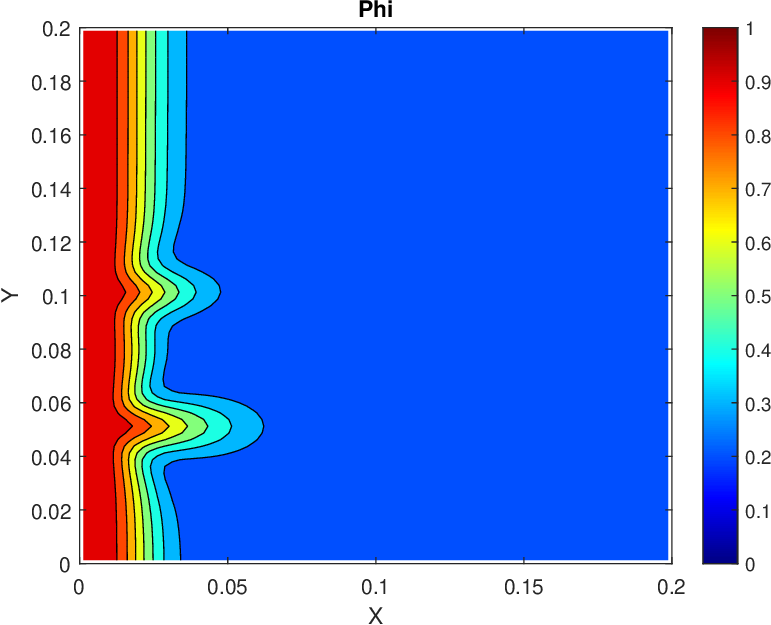}
\includegraphics[scale=0.45]{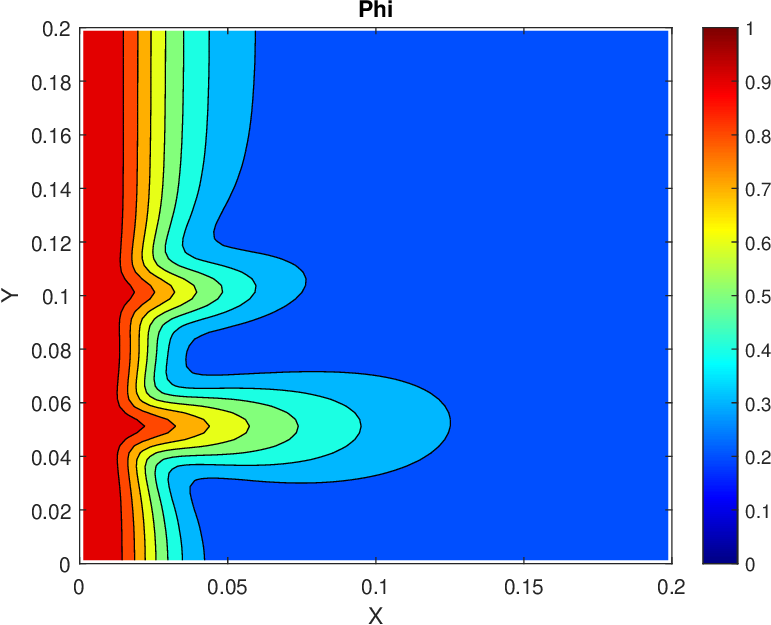}
\includegraphics[scale=0.45]{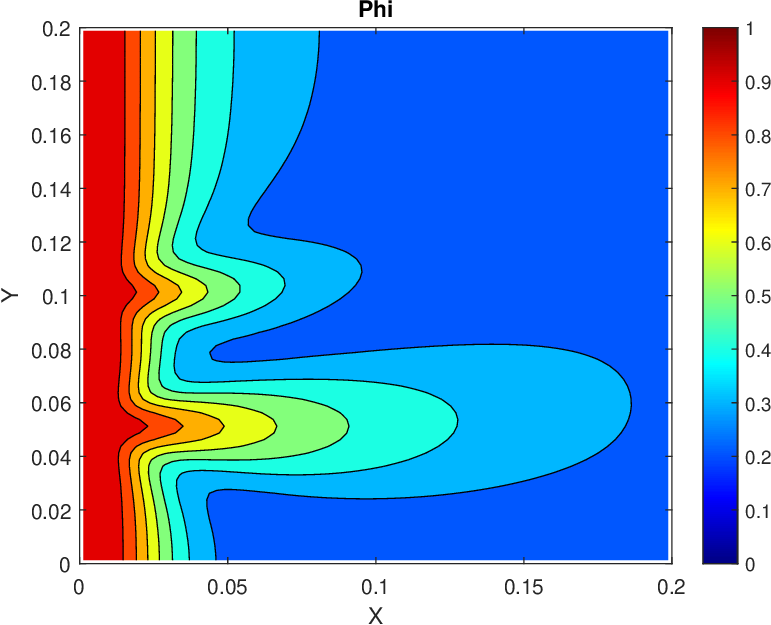}
\caption{ The distributions of porosity for Example 3 with Neumann boundary condition for temperature.}  \label{fig: porosity_example 3}
\end{figure}

\textbf{Example 4 with Robin boundary condition for temperature  in 2-D case}: In this example, we set that the temperature is 298K on the left side and satisfies homogenous Neumann condition on the other boundaries. Here $ J=[0,1\times10^6],\ \Delta t=1\times10^4$ s. 

In this example, the distributions of initial porosity and permeability in 2-D case are listed as follows:
\begin{equation*}
   \left\{
   \begin{array}{l}
\phi_0=0.5,\ \ K_0=10^{-7},\ \ \ \ (x,y)=(1.25E-3,1.0125E-1),\\
\phi_0=0.6,\ \ K_0=10^{-6},\ \ \ \ (x,y)=(1.25E-3,5.125E-2),\\
\phi_0=0.2,\ \ K_0=10^{-8},\ \ \ \ otherwise.\\
   \end{array}
   \right.
  \end{equation*}
We set following right hand side
    \begin{flalign*}
f_I=\left\{
   \begin{array}{l}
 5E-4 \ \rm{m/s},\ \  x=1.25E-3 ,\\
  0,\ \ otherwise.\\
   \end{array}
   \right.
f_P=\left\{
   \begin{array}{l}
  -5E-4 \ \rm{m/s},\ \  x=1.9875E-1,\\
  0,\ \ otherwise.\\
   \end{array}
   \right.
\end{flalign*}

 \begin{figure}[!htp]
\centering
\includegraphics[scale=0.45]{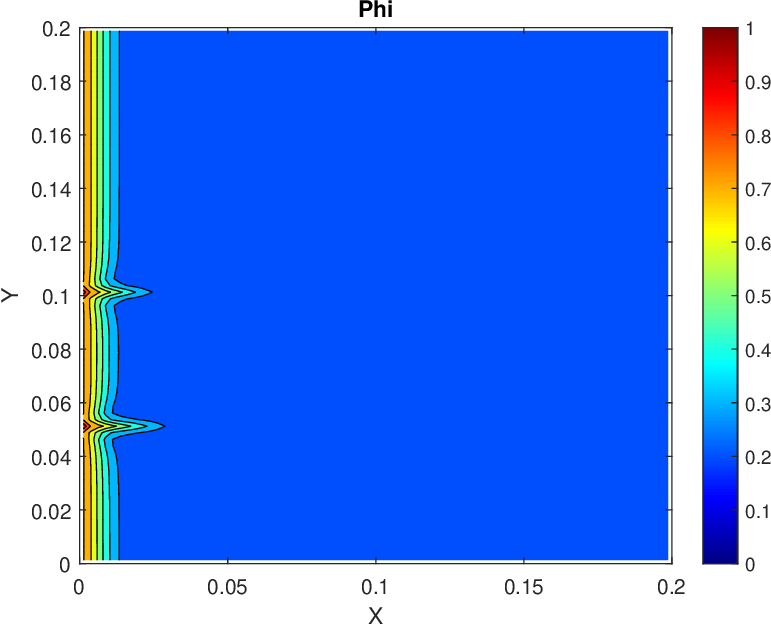}
\includegraphics[scale=0.45]{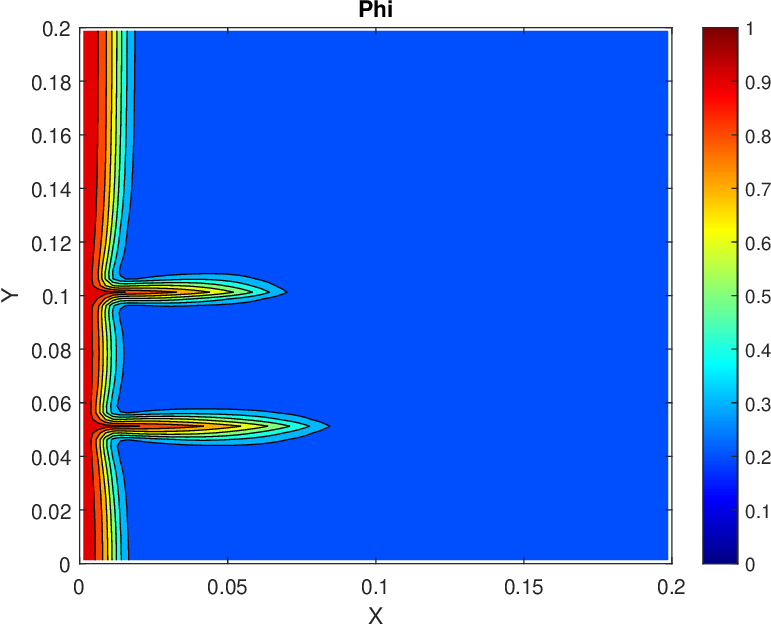}
\includegraphics[scale=0.45]{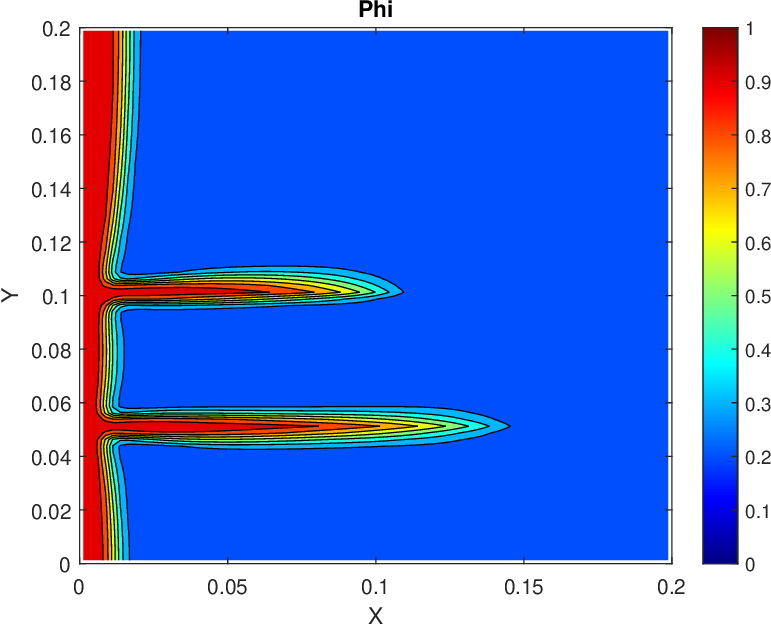}
\includegraphics[scale=0.45]{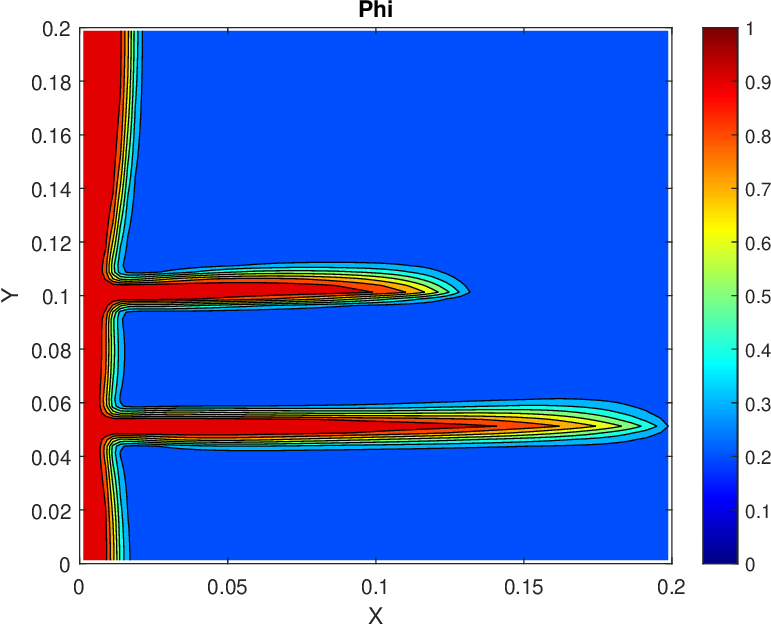}
\caption{ The distributions of porosity for Example 4 with Robin boundary condition for temperature.}  \label{fig: porosity_example 4}
\end{figure}

\textbf{Example 5 with Neumann boundary condition for temperature in 3-D case}: 
In this example, we set $ J=[0,1\times10^6],\ \Delta t=1\times10^4$ s. The distributions of initial porosity and permeability in 3-D case are listed as follows:
\begin{equation*}
   \left\{
   \begin{array}{l}
\phi_0=0.5,\ \ K_0=10^{-7},\ \ \ \ (x,y)=(2.50E-3,1.025E-1,1.025E-1),\\
\phi_0=0.6,\ \ K_0=10^{-6},\ \ \ \ (x,y)=(2.50E-3,5.25E-2,5.25E-2),\\
\phi_0=0.2,\ \ K_0=10^{-8},\ \ \ \ otherwise.\\
   \end{array}
   \right.
  \end{equation*}
We set the following right hand side
    \begin{flalign*}
f_I=\left\{
   \begin{array}{l}
 1E-4 \ \rm{m/s},\ \  x=2.50E-3 ,\\
  0,\ \ otherwise.\\
   \end{array}
   \right.
f_P=\left\{
   \begin{array}{l}
  -1E-4 \ \rm{m/s},\ \  x=1.975E-1,\\
  0,\ \ otherwise.\\
   \end{array}
   \right.
\end{flalign*}

 \begin{figure}[!htp]
\centering
\includegraphics[scale=0.45]{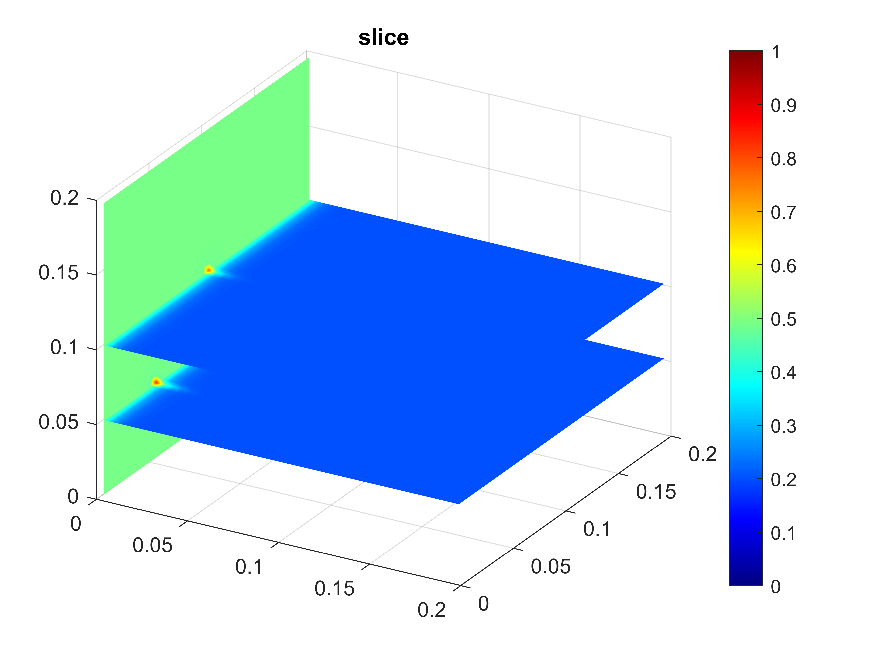}
\includegraphics[scale=0.45]{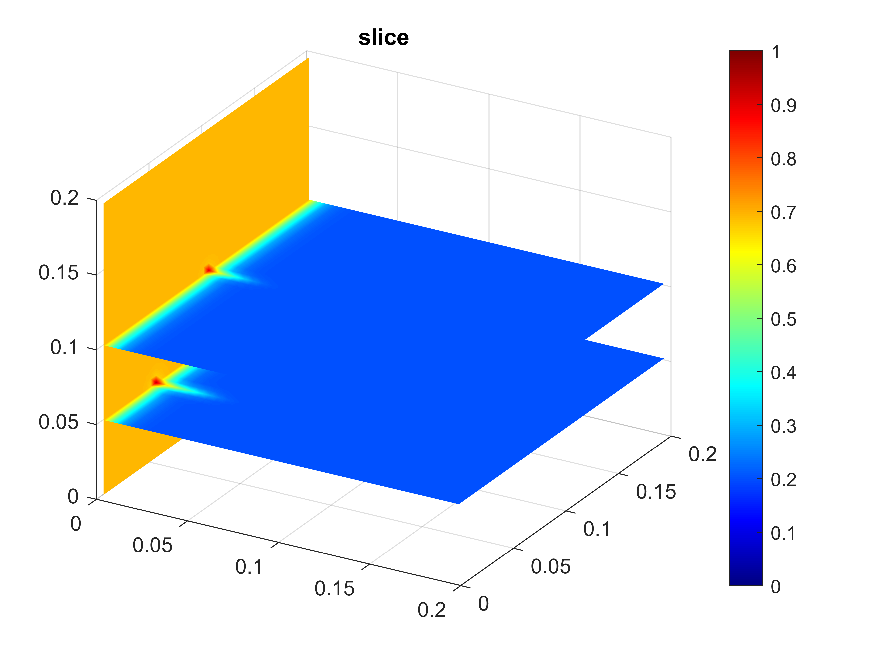}
\includegraphics[scale=0.45]{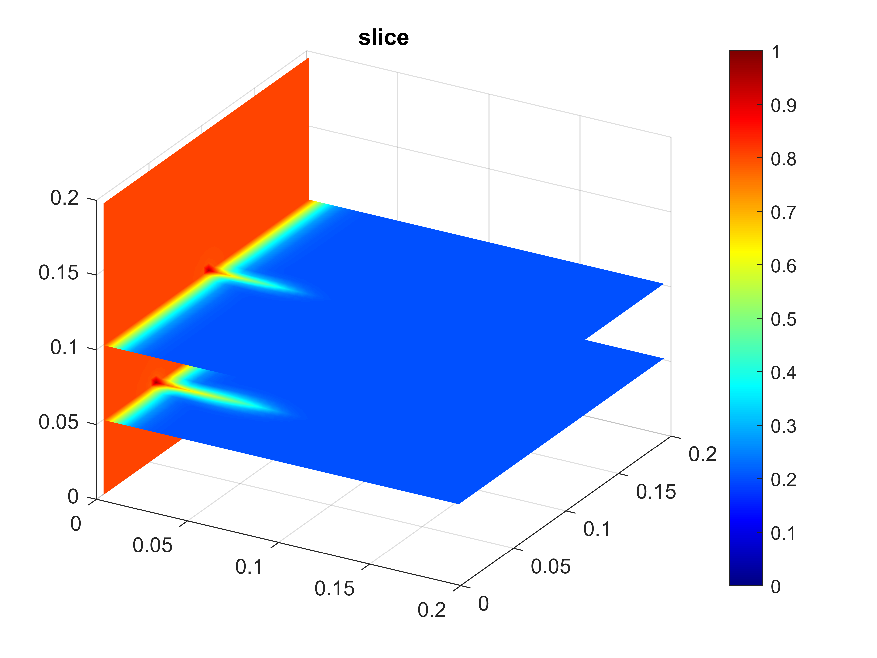}
\includegraphics[scale=0.45]{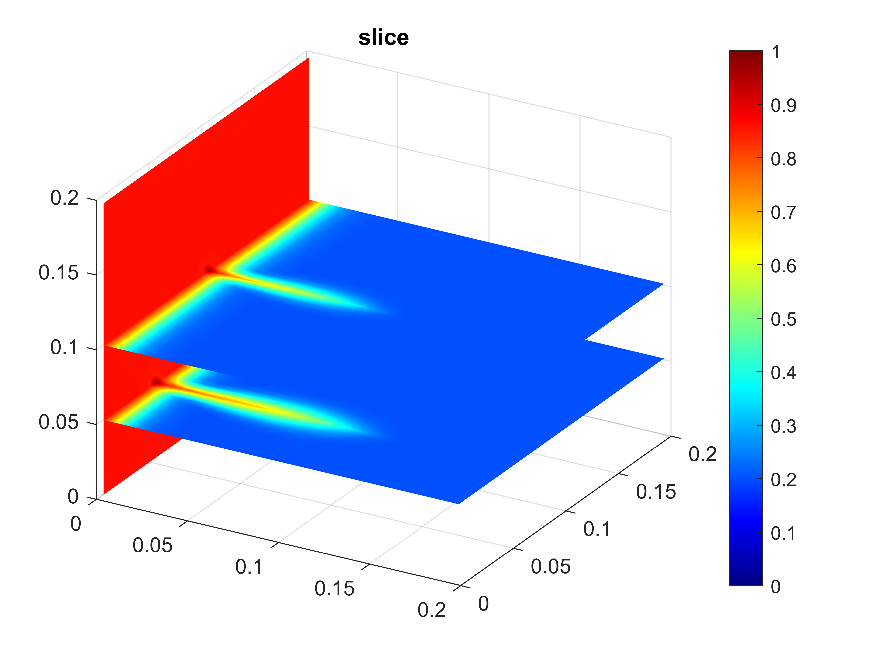}
\caption{ The distributions of porosity for Example 5 with Neumann boundary condition for temperature.}  \label{fig: porosity_example 5}
\end{figure}

The distributions of porosity for Examples 3-5 in 2- and 3-D cases are presented in Figures \ref{fig: porosity_example 3}-\ref{fig: porosity_example 5} respectively. These results are computed on the grid of 80 $\times$ 80 cells in 2-D case and 40 $\times$ 40 cells in 3-D case.  It can be easily presented that the heterogeneity of porosity and permeability in wormhole formations have significant influence on the wormhole formation dynamics, which promotes the non-uniformity of the chemical reaction. Besides the average porosities in all frameworks are increasing, which reveals that the matrix is eaten by the acid. 

\section{Concluding remarks}
In this paper, we developed a fully decoupled and linear scheme for the wormhole model  with heat transmission process on staggered grids,  which only requires solving a sequence of linear elliptic equations at each time. An error analysis for the velocity, pressure, concentration, porosity and temperature in different norms is established rigorously. Finally, we presented numerical experiments in two- and three-dimensional cases to 
verify the theoretical analysis and effectiveness of the constructed scheme.

\bibliographystyle{siamplain}
\bibliography{wormhole}

\end{document}